\newtheorem{theorem}{Theorem}
\newtheorem{remark}[theorem]{Remark}
\newtheorem{lemma}[theorem]{Lemma}
\newtheorem{corollary}[theorem]{Corollary}
\numberwithin{theorem}{section}
\numberwithin{equation}{section}
\title[Boundary geometry in nonlocal critical problems]
{The effect of boundary geometry in nonlocal critical problems with Hardy-Littlewood-Sobolev exponent}
\author[H. Chtioui]{Hichem Chtioui}
\address[H. Chtioui]{Department of Mathematics, Faculty of Sciences of Sfax, Sfax University, Tunisia}
\email{hichem.chtioui@fss.rnu.tn}
\author[T. Mukherjee]{Tuhina Mukherjee}
\address[T. Mukherjee]{Department of Mathematics, Indian Institute of Technology Jodhpur, Rajasthan 342030, India}
\email{tuhina@iitj.ac.in}
\author[L. Sharma]{Lovelesh Sharma}
\address[L. Sharma]{Department of Mathematics, Indian Institute of Technology Jodhpur, Rajasthan 342030, India}
\email{sharma.94@iitj.ac.in}
\subjclass{35A15, 35J20, 35J60}
\keywords{Choquard equations; Riesz potential; Upper critical exponent; Dirichlet-Neumann boundary conditions; Variational method; Ground state solutions}
\begin{document}

\begin{abstract}
In this paper we consider a mixed Dirichlet-Neumann boundary value problem involving Choquard nonlinearity with upper critical exponent in the sense of Hardy-Littlewood-Sobolev inequality. We investigate the effect of the geometry of the boundary part where the Neumann condition is prescribed on the existence problem of ground state solutions.
\end{abstract}

\maketitle

\section{Introduction}
Let \(\Omega\) be a bounded domain of \(\mathbb{R}^n\), \(n \geq 3\), whose boundary \(\partial\Omega\) is Lipschitz continuous and is given by the union of two disjoint \((n-1)\)-dimensional manifolds \(\Gamma_0\) and \(\Gamma_1\). Suppose that the Hausdorff measure \(H_{n-1}(\Gamma_0)\) of the boundary part \(\Gamma_0\) is positive. The present article focuses on the existence problem of nontrivial solutions of the following Choquard equation with mixed Dirichlet-Neumann boundary condition
\begin{equation}
\begin{cases}
-\Delta u + V(x)u = (I_\mu * |u|^{2^*_\mu}) |u|^{2^*_\mu-2} u & \text{in } \Omega, \\
~ u = 0 & \text{on } \Gamma_0, \\
\frac{\partial u}{\partial \nu} = 0 & \text{on } \Gamma_1,
\end{cases}\tag{P}
\label{eq:main-problem}
\end{equation}
where \(\nu\) is the unit outer normal to \(\Gamma_1\), \(\mu \in (0, n)\), if \(n =\) \(3\) or \(4\), \(\mu \in (0, 4]\), if \(n \geq 5\), \(2^*_\mu = \frac{2n - \mu}{n - 2}\) is the upper critical exponent in the sense of Hardy-Littlewood-Sobolev, \(I_\mu : \mathbb{R}^n \to \mathbb{R}\) is the Riesz potential defined by
\[
I_\mu(x) = \frac{\Gamma(\frac{\mu}{2})}{\Gamma(\frac{n-\mu}{2}) \pi^{\frac{n}{2}} 2^{n-\mu}|x|^{\mu}} , \quad x \in \mathbb{R}^n \setminus \{0\},
\]
and \(V : \overline{\Omega} \to \mathbb{R} \in C(\overline{\Omega})\) is a given external potential such that \(-\Delta + V\) is coercive with respect to the usual norm of \(H^1_0(\Omega)\).

Choquard equations have attracted the interest of a lot of scientists due to their numerous applications in various fields of mathematical physics. In 1954,  Pekar \cite{pekar1954untersuchungen} introduced the Choquard-Pekar equation
\begin{equation}\label{eq:pekar}
-\Delta u + u = (I_1 * |u|^2) u \quad \text{in } \mathbb{R}^3,
\end{equation}
to model the quantum mechanics of a polaron at rest. In 1976, Ph. Choquard used equation \eqref{eq:pekar} in the modelling of an electron trapped in its own hole as a certain approximation to Hartree-Fock theory of one component plasma \cite{lieb1977existence}.

A further application has been investigated by  Penrose \cite{penrose1996gravity}, who used equation \eqref{eq:pekar} to model self-gravitational collapse of a quantum mechanical wave function.
A considerable family of equations which extends \eqref{eq:pekar} on \(\mathbb{R}^n\), \(n \geq 3\), is given by
\begin{equation}\label{eq:general-choquard}
-\Delta u + u = (I_\mu * |u|^p) |u|^{p-2} u \quad \text{in } \mathbb{R}^n,
\end{equation}
where \(p > 1\).
The solutions of \eqref{eq:general-choquard} are the critical points of the energy functional
\[
I(u) = \frac{1}{2} \int_{\mathbb{R}^n} (|\nabla u|^2 + u^2) - \frac{1}{2p} \int_{\mathbb{R}^n} (I_\mu * |u|^p) |u|^p.
\]
If we use as a domain of \(I\) the Sobolev space \(H^1(\mathbb{R}^n)\), the second term of \(I(u)\) is defined if and only if
\[
2_{\mu} = \frac{2n - \mu}{n} \leq p \leq 2^*_\mu = \frac{2n - \mu}{n - 2}.
\]
This is a consequence of the celebrated Hardy-Littlewood-Sobolev inequality. The numbers \(2_{\mu}\) and \(2^*_\mu\) are called respectively the lower critical exponent and the upper critical exponent.

In the presence of an external potential \(V: \mathbb{R}^n \to \mathbb{R}\), the Choquard equation
\begin{equation}\label{eq:potential-choquard}
-\Delta u + V(x) u = (I_\mu * |u|^p) |u|^{p-2} u, \quad \text{in } \mathbb{R}^n,
\end{equation}
\(p \in [2_\mu, 2^*_\mu]\), has been the subject of multiple papers. When \(V\) is a perturbation of a positive constant and \(p = 2\), equation \eqref{eq:potential-choquard} has been studied by  Lions \cite{lions1980choquard}. Using his concentration-compactness principle, Lions proved some criteria for existence results. For a positive constant potential \(V\), \(p = 2\), \(\mu = 1\) and \(n = 3\),  Lieb \cite{lieb1977existence} proved the existence of a unique minimizing solution (up to translations), the so-called ground state solution.

Here and in the sequel, we define a solution \(u\) to be a ground state of problem \eqref{eq:main-problem} and related ones, whenever it is a nontrivial solution that minimizes the associated energy functional over all nontrivial solutions.

For \(V(x) = 1\), Moroz et al. \cite{moroz2013groundstates} proved a necessary and sufficient condition for obtaining ground state solutions to problem \eqref{eq:potential-choquard}; that is
\(
2_\mu < p < 2^*_\mu.
\)
The case of the lower critical exponent \(p = 2_\mu\) and a non-constant potential \(V\) has been considered by Moroz et al. \cite{moroz2015groundstates} and Cassani et al. \cite{cassani2020groundstates}, where some conditions on the potential \(V(x)\) are imposed to obtain ground state solutions.

On bounded domains \(\Omega\) of \(\mathbb{R}^n\), \(n \geq 3\), most studies on Choquard equations have been established under homogeneous Dirichlet boundary conditions. In \cite{goel2020coron},  R\u adulescu et al. considered the case where \(\Omega\) is a bounded annular- type domain. By a variational approach, they proved the existence of positive solutions when the inner hole of \(\Omega\) is sufficiently small. In \cite{alghamdi2024nonlocal}, the first author and M. Alghamdi extended the result of \cite{goel2020coron} via a topological method to any bounded domain \(\Omega\), provided that \(H_k(\Omega, \mathbb{Z}_2) \neq 0\) for some positive integer \(k\). In \cite{gao2016brezis},  Gao and  Yang studied a critical Brezis-Nirenberg type problem with Choquard nonlinearity on \(\Omega\). In \cite{gao2017nonlocal, goel2019critical, squassina2023local}, it is shown that the method of Brezis--Nirenberg \cite{brezis1983positive} developed for local elliptic equations is successfully adapted to the study of homogeneous Dirichlet--Choquard equations. For recent papers on the Choquard problem, we refer the reader to \cite{bernardini2025boundary, chen2021positive, chen2025static, squassina2023local, liang2020multiple, liang2024critical, giacomoni2023critical}.

The purpose of the present paper is to study a class of Choquard equations on bounded domains of \(\mathbb{R}^n\), \(n \geq 3\), under mixed Dirichlet-Neumann boundary conditions. Motivated by the idea developed in the work of Adimurthi and Mancini \cite{adimurthi1991neumann} on critical local equations, we investigate the effect of the geometry of the boundary part \(\Gamma_1\) on the existence of ground state solutions for problem \eqref{eq:main-problem}. We point out here that in a recent paper of  Giacomoni et al. \cite{giacomoni2023critical}, ground state solutions for problem \eqref{eq:main-problem} have been proved under suitable conditions on the external potential \(V(x)\) and a flatness condition on \(\Gamma_1\).

\vspace{0.2cm}

Now we introduce the following local geometrical condition.

Let \(a \in \Omega\). Without loss of generality, we may assume that \(a = 0_{\mathbb{R}^n}\). Let \(\varphi(x')\), \(x' \in \mathbb{R}^{n-1}\), be a local parametrization of \(\Gamma_1\) near \(a\). Therefore, for \(\rho > 0\) small enough, we have
\[
B(a,\rho) \cap \Gamma_1 = \bigl\{(x', x_n) \in B(a,\rho) \; \text{such that} \; x_n = \varphi(x')\bigr\},
\]
where \(B(a,\rho)\) is the ball of \(\mathbb{R}^n\) centred at \(a\) and of radius \(\rho\). In a generic case, we may assume that in \(B(a,\rho)\), the domain \(\Omega\) lies on one side of the tangent space of \(\Gamma_1\) at \(a\). Up to a change of coordinates, we can assume that
\[
B(a,\rho) \cap \Omega = \bigl\{(x', x_n) \in B(a,\rho) \; \text{such that} \; x_n > \varphi(x')\bigr\}.
\]

\(\mathbf{(H_1)}\): Assume that there exists a real number \(\beta \in (1, n-1]\) such that
\begin{equation}\label{1.1}
\varphi(x') = \sum_{k=1}^{n-1} \gamma_k |x'_k|^\beta + o(|x'|^\beta),
\end{equation}
with \(\sum_{k=1}^{n-1} \gamma_k > 0\).

\begin{remark}
    The restriction of the flatness order \(\beta\) in \eqref{1.1} to be less than or equal to \(n-1\) is due to a technical reason. Indeed, in the expansions of Lemma \ref{lem3.1}, the leading term becomes \(\frac{\sum_{k=1}^{n-1} \gamma_k}{\lambda^{\beta-1}}\) if \(\beta < n-1\) and \(\frac{(\sum_{k=1}^{n-1} \gamma_k) \log \lambda}{\lambda^{\beta-1}}\) if \(\beta = n-1\). For \(\beta > n-1\), the first term of the expansion becomes \(\frac{\sum_{k=1}^{n-1} \gamma_k}{\lambda^{n-2}}\), which is no longer leading over the error terms.
\end{remark}

\begin{remark}
    If \(\varphi\) is of class \(C^\infty\), then \(\beta\) is a positive integer. For \(\beta = 2\), the quantity \(\sum_{k=1}^{n-1} \gamma_k\) corresponds (up to a positive multiplicative constant) to the mean curvature of \(\Gamma_1\) at \(a\). Therefore, condition \(\mathbf{(H_1)}\) covers the hypothesis of [\cite{adimurthi1991neumann}, Theorem 1.2] for the related local equations.
\end{remark}

We shall prove the following existence results. The first theorem does not impose any further condition on \(V(x)\).

\begin{theorem}\label{th1.3}
    Assume condition \(\mathbf{(H_1)}\) with \(\beta < 3\). Then problem \eqref{eq:main-problem} admits a ground state solution.
\end{theorem}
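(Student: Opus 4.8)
\emph{Sketch of proof.} I would work in the Hilbert space $H=\{u\in H^1(\Omega):u=0\text{ on }\Gamma_0\text{ in the trace sense}\}$ equipped with the inner product $\langle u,v\rangle=\int_\Omega(\nabla u\cdot\nabla v+V(x)uv)$; by the coercivity of $-\Delta+V$ and $H_{n-1}(\Gamma_0)>0$ this induces a norm equivalent to the $H^1$-norm and the Poincaré inequality holds on $H$. Weak solutions of \eqref{eq:main-problem} are exactly the critical points of
\[
J(u)=\tfrac12\|u\|^2-\frac{1}{2\cdot 2^*_\mu}\int_\Omega\int_\Omega\frac{|u(x)|^{2^*_\mu}|u(y)|^{2^*_\mu}}{|x-y|^\mu}\,dx\,dy,
\]
which is well defined and $C^1$ on $H$ by the Hardy--Littlewood--Sobolev and Sobolev inequalities and has mountain-pass geometry. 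Ground states coincide with minimizers of $J$ over the Nehari manifold $\mathcal N=\{u\in H\setminus\{0\}:\langle J'(u),u\rangle=0\}$ (a natural constraint), and I set $c=\inf_{\mathcal N}J$, which is positive and equals the mountain-pass level.

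The first step is a \textbf{compactness threshold}. Splitting the Choquard term by a Brezis--Lieb type lemma and invoking Lions' concentration--compactness principle, one shows that any Palais--Smale sequence of $J$ at a level strictly below
\[
c^{*}=\frac12\cdot\frac{n+2-\mu}{2(2n-\mu)}\,S_{H,L}^{\frac{2n-\mu}{n+2-\mu}}
\]
is relatively compact, where $S_{H,L}$ is the best constant in the Hardy--Littlewood--Sobolev--Sobolev inequality on $\mathbb R^n$, attained by the Aubin--Talenti-type functions $U(x)=c_{n,\mu}(1+|x|^2)^{-(n-2)/2}$. The factor $\tfrac12$ comes from the Neumann condition: a bubble concentrating at an interior point sheds energy $\frac{n+2-\mu}{2(2n-\mu)}S_{H,L}^{(2n-\mu)/(n+2-\mu)}$, whereas a bubble concentrating at a point of $\Gamma_1$, after reflection across the (almost flat) tangent hyperplane, corresponds to only \emph{half} such a bubble --- this is precisely the mechanism of Adimurthi--Mancini \cite{adimurthi1991neumann}. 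Hence it suffices to exhibit $u\in H$ with $\max_{t>0}J(tu)<c^{*}$, since then the mountain-pass/Nehari minimizing sequence is a Palais--Smale sequence below $c^{*}$ and converges to a ground state.

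The second step is the \textbf{construction and expansion of test functions concentrated at the point $a\in\Gamma_1$ from $\mathbf{(H_1)}$}. Take $U_\lambda(x)=\lambda^{(n-2)/2}U(\lambda(x-a))$ and a fixed cut-off $\chi$ supported in $B(a,2R)$ with $\chi\equiv1$ on $B(a,R)$ and $R$ so small that $B(a,2R)\cap\Gamma_0=\varnothing$, and set $u_\lambda=\chi U_\lambda$. Since no Dirichlet condition is imposed on $\Gamma_1$, no truncation is needed near the concentration point, so the truncation errors live in the annulus $R<|x-a|<2R$ and are $O(\lambda^{-(n-2)})$; everything else is governed by the local geometry of $\Gamma_1$. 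Flattening $\Gamma_1$ by the change of variables attached to $\varphi$ and carefully expanding both $\|u_\lambda\|^2$ (a local Dirichlet part, the term $\int_\Omega V u_\lambda^2$, and the boundary-layer correction produced by $\varphi$) and the nonlocal double integral --- the content of Lemma \ref{lem3.1} --- yields, after maximizing in $t$,
\[
\max_{t>0}J(tu_\lambda)=c^{*}\Bigl(1-c_1\,\Phi_\beta(\lambda)+R_\lambda\Bigr),\qquad
\Phi_\beta(\lambda)=\begin{cases}\dfrac{\sum_{k=1}^{n-1}\gamma_k}{\lambda^{\beta-1}},&\beta<n-1,\\[2ex]\dfrac{\bigl(\sum_{k=1}^{n-1}\gamma_k\bigr)\log\lambda}{\lambda^{\,n-2}},&\beta=n-1,\end{cases}
\]
with $c_1=c_1(n,\mu)>0$, where $R_\lambda$ collects the lower-order terms (the truncation error $O(\lambda^{-(n-2)})$ and the potential contribution, which is of order $\lambda^{-2}$ up to logarithms, and $\lambda^{-1}$ when $n=3$, with no controlled sign). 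Because $\sum_{k=1}^{n-1}\gamma_k>0$ the quantity $c_1\Phi_\beta(\lambda)$ is positive, and the assumption $\beta<3$ forces $\beta-1<2$, so $\Phi_\beta(\lambda)$ strictly dominates every term in $R_\lambda$ --- in particular the uncontrolled $V$-contribution --- for $\lambda$ large. Thus $\max_{t>0}J(tu_\lambda)<c^{*}$, and by the compactness threshold $c$ is attained: \eqref{eq:main-problem} has a ground state.

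\textbf{Main obstacle.} The heart of the matter is the sharp expansion behind Lemma \ref{lem3.1}: one must expand the \emph{nonlocal} functional $\int\int|u_\lambda(x)|^{2^*_\mu}|u_\lambda(y)|^{2^*_\mu}|x-y|^{-\mu}$ over the curved half-space $\{x_n>\varphi(x')\}$, isolate the contribution of the profile $\varphi(x')=\sum_{k=1}^{n-1}\gamma_k|x'_k|^\beta+o(|x'|^\beta)$ in \eqref{1.1} at the correct order $\lambda^{-(\beta-1)}$ (or $\lambda^{-(n-2)}\log\lambda$ when $\beta=n-1$), and show that all remaining terms are genuinely of lower order --- this is subtler than in the local case because the convolution kernel spreads the boundary layer and the polynomial decay of $I_\mu$ couples the two variables near $\Gamma_1$; the restriction $\mu\le4$ for $n\ge5$ is exactly what keeps these error terms at the claimed order. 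The secondary, more standard but still essential point is pinning down $c^{*}$, i.e. identifying the half-space Neumann best constant and proving, through the concentration--compactness analysis, that no Palais--Smale sequence can shed more than ``half a bubble.''
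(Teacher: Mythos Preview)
Your strategy is precisely the paper's: concentrate a truncated Aubin--Talenti bubble at the point $a\in\Gamma_1$ from $\mathbf{(H_1)}$, expand both the quadratic part and the nonlocal term, and show the energy falls below the half-bubble threshold; the paper packages this via the quotient $J(u)=\|u\|_{V(\Omega)}^2/\|u\|_{\mathrm{HLS}}^2$ and cites \cite{giacomoni2023critical} for compactness below the level \eqref{eq3.1}, which is equivalent to your mountain-pass formulation. Two points of emphasis differ from your sketch and are worth knowing. First, the nonlocal expansion you flag as the main obstacle is easier than you anticipate: the identity $I_\mu * U_{(0,\lambda)}^{2^*_\mu}=\tilde C_{n,\mu}^{-4/(n-2)}\,U_{(0,\lambda)}^{2^*-2^*_\mu}$ (see \eqref{eq3.28}) collapses the double integral over the boundary layer $\Sigma=\{0<x_n\le\varphi(x')\}$ to a single local integral, handled exactly like the gradient term --- no flattening is needed, one works directly on $\Sigma$. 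Second, and more importantly, the positivity of your constant $c_1$ is \emph{not} automatic: the geometric corrections to the numerator and to the denominator of the quotient have the same sign and therefore compete, and establishing that the net effect on $J$ is a decrease is a separate computation (claim \eqref{eq3.32} in Lemma \ref{lem3.5}); this is the \emph{other} place where the hypothesis $\mu\le 4$ is genuinely used, not only to control remainder terms as you suggest.
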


The second theorem is valid for any flatness order \(\beta\), \(1 < \beta \leq n-1\).

\begin{theorem}\label{th1.4}
    Under condition \(\mathbf{(H_1)}\), if
    \[
    V(x) \leq 0 \quad \forall\, x \in B(a,\rho) \cap \Omega,
    \]
    then problem \eqref{eq:main-problem} possesses a ground state solution.
\end{theorem}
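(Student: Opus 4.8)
The plan is to obtain the ground state as a minimizer of the energy functional
\[
J(u) = \tfrac{1}{2}\int_\Omega \bigl(|\nabla u|^2 + V(x)u^2\bigr) - \tfrac{1}{2\cdot 2^*_\mu}\int_\Omega (I_\mu * |u|^{2^*_\mu})|u|^{2^*_\mu}
\]
on the Nehari manifold $\mathcal{N} = \{u \in H\setminus\{0\} : \langle J'(u),u\rangle = 0\}$, where $H$ is the closure of $\{u \in C^\infty(\overline\Omega) : u=0 \text{ on } \Gamma_0\}$ in $H^1(\Omega)$. By the coercivity hypothesis $J$ is well defined, of class $C^1$, and has the mountain‑pass geometry; since $t\mapsto J(tu)$ has, for each $u\neq 0$, a unique maximum point, one has $c:=\inf_{\mathcal N}J=\inf_{u\neq 0}\max_{t\ge 0}J(tu)$, and $c>0$ by the Hardy–Littlewood–Sobolev and Sobolev inequalities. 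A minimizing $(PS)_c$ sequence on $\mathcal N$ is produced by Ekeland's variational principle, so it suffices to show that $J$ satisfies the Palais–Smale condition at level $c$.

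The compactness threshold is the crux of the argument. Using the Brezis–Lieb–type splitting for the Choquard term together with a concentration–compactness analysis (as in the preceding sections), a non‑converging $(PS)_d$ sequence must carry a nontrivial bubble that concentrates either at an interior point of $\Omega$, with energy quantum $m_\infty$, or at a point of $\Gamma_1$, with energy quantum $\tfrac12 m_\infty$ — the latter obtained by even reflection across the tangent hyperplane and the uniqueness/symmetry of the extremal of the Hardy–Littlewood–Sobolev–Sobolev inequality — while concentration on $\Gamma_0$ is ruled out by the Dirichlet condition. Hence $J$ satisfies $(PS)_d$ for every $d\in(0,c^*)$, where
\[
c^* := \tfrac12 m_\infty = \tfrac{n+2-\mu}{4(2n-\mu)}\, S_{HL}^{\frac{2n-\mu}{n+2-\mu}},
\]
$S_{HL}$ being the best constant in the relevant inequality, and it remains to exhibit a single $u$ with $\max_{t\ge 0}J(tu)<c^*$.

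Such a function is obtained by concentrating the Aubin–Talenti‑type extremal at the point $a\in\Gamma_1$ of $\mathbf{(H_1)}$. Translating $a$ to the origin and using the coordinates in which $\Omega\cap B(a,\rho)=\{x_n>\varphi(x')\}$, I would take $u_\lambda = \eta\, U_\lambda$, with $U_\lambda(x)=\lambda^{(n-2)/2}U(\lambda x)$, $U$ the extremal centred at $0$, and $\eta$ a fixed cutoff supported in $B(a,\rho)$. Lemma \ref{lem3.1} then supplies the expansions of $\int_\Omega|\nabla u_\lambda|^2$ and of the nonlocal double integral, whose combination yields
\[
\max_{t\ge 0}J(t u_\lambda) = c^* - C\,\frac{\sum_{k=1}^{n-1}\gamma_k}{\lambda^{\beta-1}} + \tfrac12\int_\Omega V u_\lambda^2 + o\!\left(\lambda^{-(\beta-1)}\right)
\]
for some $C>0$ (with $\log\lambda/\lambda^{\beta-1}$ in place of $\lambda^{-(\beta-1)}$ when $\beta=n-1$). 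The hypothesis $V\le 0$ on $B(a,\rho)\cap\Omega$ enters exactly here: since $\mathrm{supp}\,u_\lambda\subset B(a,\rho)\cap\Omega$, we have $\int_\Omega V u_\lambda^2\le 0$, so the potential term does not compete with the geometric term — and this is precisely what permits the full range $\beta\in(1,n-1]$, in contrast to Theorem \ref{th1.3}, where a possibly positive $V$‑term of order $\lambda^{-2}$ forces $\beta<3$. Because $\sum_{k=1}^{n-1}\gamma_k>0$, the geometric term is strictly negative and dominant, whence $\max_{t\ge 0}J(tu_\lambda)<c^*$ for $\lambda$ large; thus $c<c^*$, the minimizing $(PS)_c$ sequence on $\mathcal N$ converges, and its limit is a ground state solution of \eqref{eq:main-problem}.

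The step I expect to be the genuine obstacle is the fine asymptotic analysis behind Lemma \ref{lem3.1}: one must expand both the local Dirichlet energy and the nonlocal Hardy–Littlewood–Sobolev double integral of the truncated bubble over the curved half‑domain $\{x_n>\varphi(x')\}$ and isolate the $\sum_{k=1}^{n-1}\gamma_k$‑contribution with the correct negative sign and order $\lambda^{-(\beta-1)}$ (respectively $\log\lambda\,\lambda^{-(\beta-1)}$), while controlling every remainder uniformly in $\lambda$. Once that lemma is available, the variational scheme above — $(PS)$ below $c^*$, the threshold estimate, and the Nehari minimization — is routine.
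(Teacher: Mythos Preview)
Your proposal is correct and follows essentially the same route as the paper: concentrate a truncated Talenti bubble at the point $a\in\Gamma_1$ of hypothesis $\mathbf{(H_1)}$, use the asymptotic expansions (Lemmas \ref{lem3.1}--\ref{lem3.4} in the paper) to show the test energy falls strictly below the half-bubble threshold, and observe that $V\le 0$ on $B(a,\rho)\cap\Omega$ makes the potential term nonpositive so it never competes with the geometric correction $\sum_k\gamma_k/\lambda^{\beta-1}$, which is exactly how the paper handles Theorem \ref{th1.4} in the proof of Lemma \ref{lem3.3}. The only cosmetic difference is the variational packaging: the paper minimizes the Rayleigh quotient $\|u\|_{V(\Omega)}^2/\|u\|_{\mathrm{HLS}}^2$ and invokes \cite[Lemma~3.2]{giacomoni2023critical} for the compactness threshold \eqref{eq3.1}, whereas you recast the same computation as a Nehari/mountain-pass estimate $\max_{t\ge 0}J(tu_\lambda)<c^*$ and sketch the concentration-compactness yourself---these formulations are standardly equivalent since $\max_t J(tu)$ is a monotone function of that quotient.
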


The proofs of Theorems \ref{th1.3} and \ref{th1.4} will be given in Section 3. The method follows the idea of Adimurthi and Mancini \cite{adimurthi1991neumann}, which is based on the minimization technique of  T. Aubin \cite{aubin1976equations}. It consists in proving, by means of a suitable test function, that the infimum of the associated energy functional is strictly below the first level at which the Palais-Smale condition fails. This guarantees the convergence of a minimizing sequence to a nontrivial critical point.

According to the preceding results, we present an example of domains for which problem \eqref{eq:main-problem} has ground state solutions.

\begin{corollary}\label{C01}
    Let \(\Omega_1 \subset \mathbb{R}^n\), \(n \geq 3\), be a bounded domain with smooth boundary \(\partial \Omega_1\), and let \(\Omega_0\) be an open subset of \(\Omega_1\) such that \(\overline{\Omega_0} \subset \Omega_1\). Set
    \[
    \Omega = \Omega_1 \setminus \overline{\Omega_0}, \qquad \Gamma_0 = \partial \Omega_0,\; \Gamma_1 = \partial \Omega_1.
    \]
    Then problem \eqref{eq:main-problem} admits a positive ground state solution.
\end{corollary}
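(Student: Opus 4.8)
The plan is to deduce Corollary~\ref{C01} from Theorem~\ref{th1.3}. First I would record that the data $(\Omega,\Gamma_0,\Gamma_1,V)$ meet the standing hypotheses: $\Omega=\Omega_1\setminus\overline{\Omega_0}$ is bounded with $\partial\Omega=\partial\Omega_0\cup\partial\Omega_1=\Gamma_0\cup\Gamma_1$; since $\overline{\Omega_0}$ is compact and contained in the open set $\Omega_1$, $\mathrm{dist}(\overline{\Omega_0},\partial\Omega_1)>0$, so $\Gamma_0$ and $\Gamma_1$ are disjoint, $\Gamma_1=\partial\Omega_1$ is a smooth (hence Lipschitz) $(n-1)$-manifold, $\Gamma_0=\partial\Omega_0$ is the complementary boundary piece with $H_{n-1}(\Gamma_0)>0$ (taking $\Omega_0\neq\emptyset$), and $V\in C(\overline\Omega)$ with $-\Delta+V$ coercive is the given potential. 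It therefore suffices to exhibit a single point $a\in\Gamma_1$ at which $\mathbf{(H_1)}$ holds with flatness order $\beta<3$; I will get $\beta=2$.

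The key geometric step is to choose $a$ by means of the smallest closed ball $\overline{B(x_0,R)}$ containing the compact set $\overline{\Omega_1}$ (such a ball exists by compactness, minimizing the radius). By minimality of $R$ there is a point $a\in\overline{\Omega_1}\cap\partial B(x_0,R)$; since every neighborhood of $a$ contains points outside $\overline{B(x_0,R)}\supseteq\overline{\Omega_1}$, $a$ cannot be interior to $\Omega_1$, hence $a\in\partial\Omega_1=\Gamma_1$. Because $\partial\Omega_1$ is smooth and $\overline{\Omega_1}$ touches $\partial B(x_0,R)$ from inside at $a$, the sphere $\partial B(x_0,R)$ is tangent to $\Gamma_1$ at $a$; and since $\mathrm{dist}(\overline{\Omega_0},\partial\Omega_1)>0$ we have $\Omega\cap B(a,\rho)=\Omega_1\cap B(a,\rho)$ for small $\rho$, so the local geometry of $\Omega$ at $a$ is that of $\Omega_1$. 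Take coordinates with origin at $a$ and $e_n$ along the interior unit normal of $\Gamma_1$ at $a$; then $x_0=Re_n$ and, near $a$, $\Omega_1=\{x_n>\varphi(x')\}$ with $\varphi(0)=0$, $\nabla\varphi(0)=0$, while the lower cap of $\partial B(x_0,R)$ is the graph $x_n=\psi(x'):=R-\sqrt{R^2-|x'|^2}$, for which $\psi(0)=0$, $\nabla\psi(0)=0$ and $D^2\psi(0)=\tfrac1R\,\mathrm{Id}$. The inclusion $\Omega_1\subset B(x_0,R)$ near $a$ forces $\varphi\ge\psi$ on a neighborhood of $0$ in $\mathbb{R}^{n-1}$; as $\varphi-\psi$ attains a minimum at $0$ with vanishing gradient, $D^2\varphi(0)\ge D^2\psi(0)=\tfrac1R\,\mathrm{Id}$ as quadratic forms. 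In particular $\Omega_1$ lies locally on one side of the tangent hyperplane to $\Gamma_1$ at $a$, and, after an orthogonal rotation of the $x'$-variables diagonalizing $D^2\varphi(0)$,
\[
\varphi(x')=\sum_{k=1}^{n-1}\gamma_k\,|x'_k|^2+o(|x'|^2),\qquad \gamma_k\ge\tfrac{1}{2R}>0 .
\]
Thus $\mathbf{(H_1)}$ holds at $a$ with $\beta=2\in(1,n-1]$ (recall $n\ge3$) and $\sum_{k=1}^{n-1}\gamma_k\ge\tfrac{n-1}{2R}>0$. Since $2<3$, Theorem~\ref{th1.3} applies and produces a ground state solution $u$ of \eqref{eq:main-problem}.

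It remains to make $u$ positive. The energy functional and the constraint set of the variational scheme of Section~3 are invariant under $u\mapsto|u|$, because $\bigl|\nabla|u|\bigr|=|\nabla u|$ a.e.\ and the Choquard term depends only on $|u|$; hence the ground state can be chosen with $u\ge0$, $u\not\equiv0$, and as a critical point it is a weak solution of \eqref{eq:main-problem}. Writing the equation as $-\Delta u+V(x)u=(I_\mu*|u|^{2^*_\mu})\,|u|^{2^*_\mu-2}u\ge0$ in $\Omega$, with $V$ bounded on the compact set $\overline\Omega$, the strong maximum principle gives $u>0$ in $\Omega$, which is the asserted positive ground state.

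The steps that need the most care are the geometric ones: fixing the orientation so that $\Omega$ sits on the side $\{x_n>\varphi(x')\}$ and verifying that the enclosing-ball touching condition forces the principal curvatures $\gamma_k$ at $a$ to be strictly positive --- so $\Omega_1$ is locally ``convex'' there and $\sum_k\gamma_k>0$, precisely the sign demanded by $\mathbf{(H_1)}$. The positivity part is routine; its only subtlety is that the strong maximum principle is invoked with a continuous but sign-indefinite zeroth-order coefficient $V$, which is admissible since $u\ge0$ and $V$ is bounded. No estimate beyond those already in Theorem~\ref{th1.3} is required.
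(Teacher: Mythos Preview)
Your proof is correct and follows the same route as the paper: locate a point $a\in\Gamma_1=\partial\Omega_1$ of positive mean curvature so that $\mathbf{(H_1)}$ holds with $\beta=2$, then invoke Theorem~\ref{th1.3}; the paper asserts the existence of such a point in one line, while you supply it explicitly via the smallest enclosing ball and also spell out the positivity via $u\mapsto|u|$ and the strong maximum principle.
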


\begin{figure}[h]
    \centering
    \begin{tikzpicture}[scale=0.8]
    % Shaded region Omega = Omega_1 \setminus \overline{Omega_0}
    \fill[gray!30, even odd rule]
        (0,0) circle (3)
        (0,0) circle (1.2);

    % Outer boundary Gamma_1
    \draw[thick, blue] (0,0) circle (3);
    \node[blue] at (2,2) {$\Omega_1$};

    % Inner boundary Gamma_0
    \draw[thick, red] (0,0) circle (1.2);
    \node[red] at (0.6,0.6) {$\Omega_0$};

    % Labels
    \node at (0,1.5) {$\Omega = \Omega_1 \setminus \overline{\Omega_0}$};
    \node[blue] at (0,-2.8) {$\Gamma_1 = \partial\Omega_1$};
    \node[red] at (0,-1.3) {$\Gamma_0 = \partial\Omega_0$};
\end{tikzpicture}
    \caption{Domain configuration for Corollary \ref{C01}}
\end{figure}
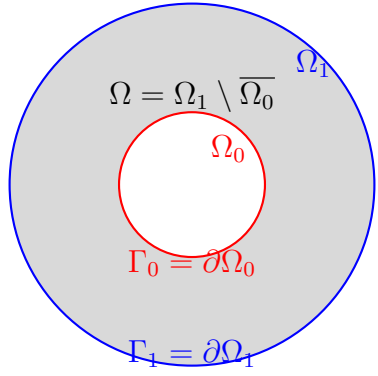
We point out that the choice of the boundary part \(\Gamma_1\) in Corollary \ref{C01} as the boundary of \(\Omega_1\) guarantees the existence of a point \(a\in \Gamma_1\) where the mean curvature of \(\Gamma_1\) is positive. Hence condition \(\mathbf{(H_1)}\) holds around \(a\) with \(\beta = 2\). Thus, Corollary \ref{C01} follows from Theorem \ref{th1.3}.

However, if \(\Gamma_1\) is taken as the boundary \(\partial \Omega_0\), condition \(\mathbf{(H_1)}\) fails in general (for instance, in the case of an annular domain). This leads to the following natural question:

\noindent\textbf{Question:} Let \(\Omega\) be a connected bounded domain as in Corollary \ref{C01}, but with \(\Gamma_0 = \partial \Omega_1\) and \(\Gamma_1 = \partial \Omega_0\). Does problem \eqref{eq:main-problem} possess a nontrivial solution?
\section{Variational analysis}
First, we recall the Hardy-Littlewood-Sobolev inequality.

\begin{lemma}[\cite{lieb2001analysis}] \label{lem:HLS}
    Let \(t_1, t_2 > 1\) satisfy \(\frac{1}{t_1} + \frac{1}{t_2} = \frac{2n - \mu}{n}\). Then,
    \begin{equation}\label{eq:HLS-inequality}
        \int_{\mathbb{R}^n} \int_{\mathbb{R}^n} \frac{f(y)\, g(x)}{|y-x|^\mu} \, dx\, dy
        \leq C \|f\|_{L^{t_1}(\mathbb{R}^n)} \|g\|_{L^{t_2}(\mathbb{R}^n)},
    \end{equation}
    where \(C\) is a positive constant independent of \(f\) and \(g\). Moreover, if
    \(t_1 = t_2 = \frac{2n}{2n-\mu}\), then
    \[
        C = \pi^{\frac{\mu}{2}} \frac{\Gamma\!\bigl(\frac{n-\mu}{2}\bigr)}{\Gamma\!\bigl(n-\frac{\mu}{2}\bigr)}
        \cdot \frac{\Gamma\!\bigl(\frac{n}{2}\bigr)}{\Gamma(n)}.
    \]
    In this case, equality holds in \eqref{eq:HLS-inequality} if and only if
    \[
        f(x) = g(x) = A \bigl( \lambda^2 + |x-a|^2 \bigr)^{-\frac{2n-\mu}{2}}, \quad x \in \mathbb{R}^n,
    \]
    up to a multiplicative constant, where \(A \in \mathbb{C}\), \(\lambda > 0\) and \(a \in \mathbb{R}^n\).
\end{lemma}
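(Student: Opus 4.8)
The statement is the classical Hardy--Littlewood--Sobolev inequality, and since the value of the sharp constant and the precise form of the extremal family are exactly what the later test-function estimates rely on, the plan is to sketch a proof of the full statement. The argument splits into two essentially independent parts: (i) the inequality \eqref{eq:HLS-inequality} with \emph{some} finite constant, for the general exponent pair $(t_1,t_2)$, via real-variable interpolation; and (ii) the identification of the sharp constant together with the classification of extremisers in the conformal case $t_1=t_2=\tfrac{2n}{2n-\mu}$, which exploits rearrangement and the conformal invariance of the diagonal functional.

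For part (i), first reduce to $f,g\ge 0$ by replacing them with $|f|,|g|$. The kernel $K(x)=|x|^{-\mu}$ lies in the weak space $L^{n/\mu,\infty}(\mathbb{R}^n)$ since $\bigl|\{x:|x|^{-\mu}>s\}\bigr|=c_n\,s^{-n/\mu}$ for a dimensional constant $c_n$. The next step is the weak Young (O'Neil) convolution estimate: for $1<p<q<\infty$ with $\tfrac1p+\tfrac{\mu}{n}=1+\tfrac1q$, the map $h\mapsto K*h$ is bounded $L^p(\mathbb{R}^n)\to L^q(\mathbb{R}^n)$. One proves this from the layer-cake splitting $K=K\,\mathbf{1}_{\{|x|\le R\}}+K\,\mathbf{1}_{\{|x|>R\}}$: apply the ordinary Young inequality to each summand, optimise the threshold $R$ pointwise, obtain weak-type bounds at the two relevant endpoints, and finish with Marcinkiewicz interpolation. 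Taking $p=t_1$ forces $q=t_2'$ because the hypothesis $\tfrac1{t_1}+\tfrac1{t_2}=\tfrac{2n-\mu}{n}$ is exactly the relation $\tfrac1{t_1}-\tfrac1{t_2'}=1-\tfrac{\mu}{n}$; Hölder's inequality applied to $\int_{\mathbb{R}^n}(K*f)\,g$ then yields \eqref{eq:HLS-inequality} with a finite $C$.

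For part (ii), fix $t_1=t_2=\tfrac{2n}{2n-\mu}$ and consider the problem of maximising $\mathcal{E}(f,g)=\int_{\mathbb{R}^n}\!\int_{\mathbb{R}^n}\tfrac{f(y)g(x)}{|y-x|^\mu}\,dx\,dy$ subject to $\|f\|_{t_1}=\|g\|_{t_1}=1$. By the Riesz rearrangement inequality, $\mathcal{E}$ does not decrease under symmetric decreasing rearrangement of $f$ and of $g$ while the constraints are preserved, so a maximising sequence may be taken radial and nonincreasing; the strict version of Riesz rearrangement moreover forces any maximiser to be $f=g$ up to a common translation and dilation. The diagonal functional $\mathcal{E}(f,f)/\|f\|_{t_1}^2$ is invariant under the conformal group --- transported to $S^n$ by stereographic projection --- and this extra noncompact symmetry is precisely the mechanism by which a maximising sequence could lose compactness. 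Lieb's \emph{competing symmetries} argument resolves this: iterate the composition of symmetric decreasing rearrangement with a fixed conformal inversion that does not commute with it, show $\mathcal{E}$ is monotone along the iteration and that the iterates converge in $L^{t_1}$, and identify the limit as the unique joint fixed point, which one computes to be a multiple of $U_{\lambda,a}(x)=(\lambda^2+|x-a|^2)^{-\frac{2n-\mu}{2}}$. Finally, evaluating $\mathcal{E}(U_{\lambda,a},U_{\lambda,a})/\|U_{\lambda,a}\|_{t_1}^2$ --- a Beta-function computation using the explicit Riesz composition identity for powers of $|x|$ --- yields the stated constant $C=\pi^{\mu/2}\,\frac{\Gamma((n-\mu)/2)}{\Gamma(n-\mu/2)}\cdot\frac{\Gamma(n/2)}{\Gamma(n)}$.

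It remains to characterise \emph{all} cases of equality. After the reductions above a maximiser is a nonnegative $u$ satisfying the Euler--Lagrange relation $u^{t_1-1}=c\,(K*u)$; writing $v=u^{t_1-1}$ and using $t_1-1=\tfrac{\mu}{2n-\mu}$, this is the critical integral equation $v(x)=c\int_{\mathbb{R}^n}|x-y|^{-\mu}\,v(y)^{\frac{2n-\mu}{\mu}}\,dy$, whose positive solutions are classified --- by the moving-plane-in-integral-form method of Chen--Li--Ou --- to be exactly the functions $c(\lambda^2+|x-a|^2)^{-\mu/2}$; hence $u=v^{(2n-\mu)/\mu}$ is a multiple of $U_{\lambda,a}$, which gives the asserted form of the extremisers. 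I expect the main obstacle to be the non-compactness in part (ii): part (i) is routine once the O'Neil estimate is in hand, whereas excluding escape of mass for a maximising sequence genuinely uses the conformal invariance of the diagonal case, so there is no way around either the delicate competing-symmetries compactness argument or a full Liouville-type classification of the integral equation.
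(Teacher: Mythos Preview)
The paper does not prove this lemma at all: it is stated with a citation to \cite{lieb2001analysis} and used as a black box, so there is no ``paper's own proof'' to compare against. Your outline is a faithful sketch of the standard argument (weak Young/Marcinkiewicz for part~(i), Lieb's competing-symmetries method plus the Chen--Li--Ou integral-equation classification for part~(ii)), and nothing in it is wrong, but in the context of this manuscript no proof is expected or required---the authors simply invoke the result.
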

Note that under the above inequality, the integral
\begin{equation}\label{eq2.2}
\int_{\mathbb{R}^n} \int_{\mathbb{R}^n} \frac{|u(y)|^q |u(x)|^q}{|x-y|^\mu} \, dx\, dy
\end{equation}
is well defined provided \(|u|^q \in L^t(\mathbb{R}^n)\) with \(t = \frac{2n}{2n-\mu}\). It follows from the Sobolev inequality that \eqref{eq2.2} is well defined on \(H^1(\mathbb{R}^n)\) only for
\[
\frac{2n - \mu}{n} \leq q \leq \frac{2n - \mu}{n-2}.
\]
In the case of the upper critical Hardy-Littlewood-Sobolev exponent \(2^*_\mu = \frac{2n - \mu}{n-2}\), it is proved in [\cite{gao2016brezis}, Lemma 2.3] that for any bounded domain \(\Omega\) of \(\mathbb{R}^n\), \(n\geq 3\),
\[
\|u\|_{\mathrm{HLS}} := \left( \int_{\Omega} \int_{\Omega} \frac{|u(y)|^{2^*_\mu} |u(x)|^{2^*_\mu}}{|x-y|^\mu} \, dx\, dy \right)^{\frac{1}{2\cdot 2^*_\mu}}
\]
is a norm on \(X_{\mathrm{HLS}} := \{ u : \Omega \to \mathbb{R} \mid \|u\|_{\mathrm{HLS}} < \infty \}\). Moreover, the best constant
\begin{equation}\label{eq2.3}
S_{\mathrm{HL}} := \inf_{u \in H^1_0(\Omega) \setminus \{0\}} \frac{\int_{\Omega} |\nabla u|^2 \, dx}{\|u\|_{\mathrm{HLS}}^2}
\end{equation}
is independent of \(\Omega\) and is never achieved except when \(\Omega = \mathbb{R}^n\). In the case \(\Omega = \mathbb{R}^n\), the unique minimizers for \(S_{\mathrm{HL}}\) are the functions of the form
\[
u(x) = c \, \delta_{(a,\lambda)}(x), \quad x \in \mathbb{R}^n,
\]
where \(c\) is a positive constant and, for \(a \in \mathbb{R}^n\) and \(\lambda > 0\),
\begin{equation}\label{eq2.4}
\delta_{(a,\lambda)}(x) = (n(n-2))^{\frac{n-2}{4}} \frac{\lambda^{\frac{n-2}{2}}}{\bigl(1 + \lambda^2 |x-a|^2 \bigr)^{\frac{n-2}{2}}}.
\end{equation}
See [\cite{gao2016brezis}, Lemma 1.2]. Let
\[
S = \inf_{u \in H^1_0(\Omega) \setminus \{0\}} \frac{\int_{\Omega} |\nabla u|^2 \, dx}{\|u\|^2_{L^{\frac{2n}{n-2}}(\Omega)}}
\]
be the best Sobolev constant, and let
\[
C_{n,\mu} = \frac{1}{2^{n-\mu} \pi^{\frac{n-\mu}{2}}}
\frac{\Gamma\!\bigl(\frac{\mu}{2}\bigr)}{\Gamma\!\bigl(n-\frac{\mu}{2}\bigr)}
\left( \frac{\Gamma(n)}{\Gamma\!\bigl(\frac{n}{2}\bigr)} \right)^{\frac{n-\mu}{n}}.
\]
It is proved in [\cite{du2018uniqueness}, Lemma 1.2] (see also \cite{gao2016brezis}) that
\begin{equation}\label{eq2.5}
U_{(a,\lambda)}(x) = \left( S^{\frac{n-\mu}{2}} C_{n,\mu} \right)^{\frac{2-n}{2(n-\mu+2)}} \delta_{(a,\lambda)}(x)
= \widetilde{C}_{n,\mu} \, \delta_{(a,\lambda)}(x),
\end{equation}
with \(a \in \mathbb{R}^n\) and \(\lambda > 0\), is the unique family of positive solutions of
\begin{equation}\label{eq2.6}
-\Delta u = \bigl( I_\mu * U^{2^*_\mu} \bigr) U^{2^*_\mu-1} \quad \text{in } \mathbb{R}^n.
\end{equation}
We now define
\[
V(\Omega) = \bigl\{ u \in H^1(\Omega) \mid u = 0 \text{ on } \Gamma_0 \bigr\}.
\]
Under our assumptions,
\[
\|u\|_{V(\Omega)} := \left( \int_{\Omega} |\nabla u|^2 \, dx + \int_{\Omega} V(x) u^2 \, dx \right)^{\frac{1}{2}}
\]
is a norm on \(V(\Omega)\) equivalent to \(|u|_{1,\Omega} := \bigl( \int_{\Omega} |\nabla u|^2 \, dx \bigr)^{\frac{1}{2}}\).

The energy functional associated with problem \eqref{eq:main-problem} is
\begin{equation}\label{eq2.7}
J(u) = \frac{\|u\|_{V(\Omega)}^2}{\|u\|_{\mathrm{HLS}}^2}, \qquad u \in V(\Omega) \setminus \{0\}.
\end{equation}
Up to multiplication by a positive constant, a critical point of \(J\) is a nontrivial solution of \eqref{eq:main-problem}, and a minimizer of \(J\) is a ground state solution of \eqref{eq:main-problem}. Setting
\begin{equation}\label{eq2.8}
S_{\mathrm{HL}}(\Gamma_0) := \inf_{u \in V(\Omega) \setminus \{0\}} J(u).
\end{equation}
The Hardy-Littlewood-Sobolev inequality together with the equivalence of \(\|\cdot\|_{V(\Omega)}\) and \(|\cdot|_{1,\Omega}\) in \(V(\Omega)\) implies that \(S_{\mathrm{HL}}(\Gamma_0)\) is positive. Moreover, if \(u_0\) is a minimizer of \(S_{\mathrm{HL}}(\Gamma_0)\), then \(|u_0|\) is also a minimizer, because \(|u_0| \in V(\Omega) \setminus \{0\}\) and \(J(|u_0|) = J(u_0)\). Consequently, both \(u_0\) and \(|u_0|\) are ground state solutions of problem \eqref{eq:main-problem} (up to a positive multiplicative constant).
\section{Proof of the existence result}
This section is devoted to the proof of Theorems \ref{th1.3} and \ref{th1.4}.
We prove that, under the assumptions of the theorems, the minimization constant
\(S_{HL}(\Gamma_0)\), defined in \eqref{eq2.8}, is achieved.
To this end, we adopt some ideas from \cite{adimurthi1991neumann}, which are based on
minimization arguments from \cite{aubin1976equations}. It consists to prove by using a suitable test function that \(S_{HL}(\Gamma_0)\) is strictly less than the first energy level at which the Palais--Smale condition fails. This ensures the convergence of minimizing
sequences in \(V(\Omega)\setminus \{0\}\).

Namely, it is proved in \cite[Lemma 3.2]{giacomoni2023critical} that
\(S_{HL}(\Gamma_0)\) is attained provided that
\begin{equation}\label{eq3.1}
S_{HL}(\Gamma_0)
< \left( \frac{1}{2} \right)^{\frac{2^*_\mu-2}{2^*_\mu}} S_{HL}.
\end{equation}
Let \(a \in \Gamma_1\) be a point satisfying the condition \(\mathbf{(H_1)}\).
Up to a translation and a rotation, we may assume that
\(a = 0_{\mathbb{R}^n}\). For \(\rho > 0\) sufficiently small, we define the following
cut-off function on \(\mathbb{R}^n\) such that
\[
\psi(x) :=
\begin{cases}
1, & \text{if } |x| < \dfrac{\rho}{2}, \\[4pt]
0, & \text{if } |x| > \rho.
\end{cases}
\]
For \(\lambda > 0\) sufficiently large, we define
\[
w_\lambda(x) = \psi(x)\, U_{(0,\lambda)}(x), \quad x \in \Omega,
\]
where \(U_{(0,\lambda)}\) is defined in \eqref{eq2.5}.
By construction, the test function \(w_\lambda \in V(\Omega)\setminus \{0\}\).

In order to expand \(J(w_\lambda)\), (see \eqref{eq2.7}), we need to estimate
\(\|w_\lambda\|_{V(\Omega)}^2\) and \(\|w_\lambda\|_{HLS}^2\).
For this purpose, we introduce the following Lemmas.

%%%%%%%%%%%%%%%%%%%%%%%%%%%%%%%%%%%%%%%%%%%%%%%%%%%%%%%%%%%%%
\begin{lemma}\label{lem3.1}
Assume that the expansion \eqref{1.1} holds. Then we have the following asymptotic estimates

\begin{enumerate}[label=(\roman*)]
    \item For \(\beta \in (1, n-1)\),
    \[
    \int_{\Omega} |\nabla w_\lambda|^2 \, dx = \frac{1}{2} S_{HL}^{\frac{2^*_{\mu}}{2^*_{\mu}-1}}
    - c_0 \tilde{C}_{n,\mu}^2 (n-2)^2 \bigl(n(n-2)\bigr)^{\frac{n-2}{2}}
    \frac{\sum_{k=1}^{n-1} \gamma_k}{\lambda^{\beta-1}}
    + O\!\left( \frac{1}{\lambda^{n-2}} \right),
    \]

    \item For \(\beta = n-1\),
    \[
    \int_{\Omega} |\nabla w_\lambda|^2 \, dx = \frac{1}{2} S_{HL}^{\frac{2^*_{\mu}}{2^*_{\mu}-1}}
    - \hat{c}\left( \sum_{k=1}^{n-1} \gamma_k \right) \frac{\log \lambda}{\lambda^{\beta-1}}
    + O\!\left( \frac{\log \lambda}{\lambda^{\beta-1}} \right).
    \]
\end{enumerate}

Here \(c_0 > 0\) is the constant defined by
\[
c_0 = \int_{\mathbb{R}^{n-1}} \frac{|z_1|^{\beta}}{(1+|z|^2)^{n-1}} \, dz
    - \int_{\mathbb{R}^{n-1}} \frac{|z_1|^{\beta}}{(1+|z|^2)^n} \, dz
    =: c_1 - c_2,
\]
\(\tilde{C}_{n,\mu}\) is defined in \eqref{eq2.5}, and \(\hat{c} > 0\) is a positive constant.
\end{lemma}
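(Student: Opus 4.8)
The plan is to reduce the expansion to the classical Aubin--Talenti computation, since the integrand $|\nabla w_\lambda|^2$ involves only the bubble $U_{(0,\lambda)}=\widetilde C_{n,\mu}\,\delta_{(0,\lambda)}$ and not the Choquard term (this is why $\mathbf{(H_1)}$ recovers the Adimurthi--Mancini hypothesis). First I would localize: writing $\nabla w_\lambda=\psi\,\nabla U_{(0,\lambda)}+U_{(0,\lambda)}\nabla\psi$ and using that $\psi\equiv1$ on $B(0,\rho/2)$ together with the bounds $U_{(0,\lambda)}=O(\lambda^{-(n-2)/2})$ and $|\nabla U_{(0,\lambda)}|=O(\lambda^{-(n-2)/2})$ on the annulus $\{\rho/2<|x|<\rho\}$, which follow from \eqref{eq2.4}--\eqref{eq2.5}, one sees that every term carrying a factor $\nabla\psi$, as well as $\int_{\Omega\cap\{\rho/2<|x|<\rho\}}|\nabla U_{(0,\lambda)}|^2$, is $O(\lambda^{-(n-2)})$. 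Hence it suffices to expand $\int_{\Omega\cap B(0,\rho/2)}|\nabla U_{(0,\lambda)}|^2\,dx$.

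Next I would compare $\Omega$ near $a=0$ with the half-space $\{x_n>0\}$. Since $\Omega\cap B(0,\rho/2)$ differs from $\{x_n>0\}\cap B(0,\rho/2)$ only in the thin collar bounded by the graphs $x_n=0$ and $x_n=\varphi(x')$, Fubini's theorem gives
\[
\int_{\Omega\cap B(0,\rho/2)}|\nabla U_{(0,\lambda)}|^2
=\int_{\{x_n>0\}\cap B(0,\rho/2)}|\nabla U_{(0,\lambda)}|^2
-\int_{B'(0,\rho/2)}\Big(\int_0^{\varphi(x')}|\nabla U_{(0,\lambda)}(x',x_n)|^2\,dx_n\Big)\,dx',
\]
where $B'(0,\rho/2)\subset\mathbb{R}^{n-1}$ and the inner integral is taken with the usual orientation when $\varphi(x')<0$. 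By the radial symmetry of $U_{(0,\lambda)}$ and the tail estimate, the first term on the right equals $\frac{1}{2}\int_{\mathbb{R}^n}|\nabla U_{(0,\lambda)}|^2+O(\lambda^{-(n-2)})$; and using \eqref{eq2.3}, \eqref{eq2.5} and testing \eqref{eq2.6} against $U_{(0,\lambda)}$, one has $\int_{\mathbb{R}^n}|\nabla U_{(0,\lambda)}|^2=S_{HL}^{2^*_\mu/(2^*_\mu-1)}$. This produces the leading term $\frac{1}{2}S_{HL}^{2^*_\mu/(2^*_\mu-1)}$ in both (i) and (ii).

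The heart of the proof is the evaluation of the collar integral, call it $D_\lambda$. Here I would rescale $y=\lambda x$: from \eqref{eq2.4}--\eqref{eq2.5} one has $|\nabla U_{(0,\lambda)}(x)|^2=\lambda^n g(\lambda x)$ with $g(y)=\widetilde C_{n,\mu}^2\,(n(n-2))^{\frac{n-2}{2}}\,(n-2)^2\,|y|^2(1+|y|^2)^{-n}$, so that
\[
D_\lambda=-\int_{B'(0,\lambda\rho/2)}\Big(\int_0^{\lambda\varphi(y'/\lambda)}g(y',y_n)\,dy_n\Big)\,dy'.
\]
By $\mathbf{(H_1)}$, $\lambda\varphi(y'/\lambda)=\lambda^{1-\beta}\big(\sum_{k=1}^{n-1}\gamma_k|y'_k|^\beta+\mathrm{err}_\lambda(y')\big)$ with $\mathrm{err}_\lambda(y')\to0$ locally uniformly and $|\mathrm{err}_\lambda(y')|\le\varepsilon|y'|^\beta$ on $B'(0,\lambda\delta)$; since $\beta>1$ the upper limit tends to $0$, so a Taylor expansion of the smooth, $y_n$-even function $g$ yields $\int_0^h g(y',y_n)\,dy_n=h\,g(y',0)+O(h^3)$. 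To leading order this reduces $D_\lambda$ to $-\lambda^{1-\beta}\sum_k\gamma_k\int_{B'(0,\lambda\rho/2)}|y'_k|^\beta g(y',0)\,dy'$. Using the identity $|y'|^2(1+|y'|^2)^{-n}=(1+|y'|^2)^{-(n-1)}-(1+|y'|^2)^{-n}$ and the permutation symmetry of the coordinates, $\int_{\mathbb{R}^{n-1}}|y'_k|^\beta|y'|^2(1+|y'|^2)^{-n}\,dy'=c_0=c_1-c_2$. For $\beta<n-1$ this integral converges and dominated convergence gives the term $-c_0\,\widetilde C_{n,\mu}^2\,(n-2)^2\,(n(n-2))^{\frac{n-2}{2}}\,\sum_k\gamma_k\,\lambda^{-(\beta-1)}$, which is (i); for $\beta=n-1$ the integrand decays like $|y'|^{-1}$ at infinity, so (by polar coordinates with the $|y'_k|^\beta$ weight) $\int_{B'(0,\lambda\rho/2)}|y'_k|^{n-1}|y'|^2(1+|y'|^2)^{-n}\,dy'=\widehat\omega\log\lambda+O(1)$ for some $\widehat\omega>0$, yielding (ii) with $\widehat c=\widehat\omega\,\widetilde C_{n,\mu}^2\,(n-2)^2\,(n(n-2))^{\frac{n-2}{2}}$.

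The main obstacle is the error bookkeeping in $D_\lambda$. One must verify that the $O(h^3)$ Taylor remainder, after rescaling, is $o(\lambda^{-(\beta-1)})$ — it is $O(\lambda^{3(1-\beta)})$ from the region $|x'|\lesssim 1/\lambda$ and $O(\lambda^{-(n-2)})$ from $|x'|\gtrsim 1$, where the decay of $\nabla U_{(0,\lambda)}$ is exploited — so that it is absorbed into the stated remainder; that the $o(|x'|^\beta)$ term of $\mathbf{(H_1)}$ is handled by the standard two-parameter limit (let $\lambda\to\infty$, then $\varepsilon\to0$), splitting $B'(0,\rho/2)$ into $B'(0,\delta)$, where $|\mathrm{err}_\lambda|\le\varepsilon|y'|^\beta$, and its complement, where $\varphi$ is bounded and $|\nabla U_{(0,\lambda)}|^2$ is pointwise $O(\lambda^{-(n-2)})$; and, in case (ii), that the logarithmic factor is isolated cleanly. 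These are routine but delicate Aubin-type estimates, in the spirit of \cite{adimurthi1991neumann}.
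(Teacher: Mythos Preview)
Your proposal is correct and follows essentially the same route as the paper: localize to $\Omega\cap B(0,\rho/2)$ with an $O(\lambda^{-(n-2)})$ error, split into the half-ball (giving $\tfrac12 S_{HL}^{2^*_\mu/(2^*_\mu-1)}$) minus the thin collar $\Sigma=\{0<x_n\le\varphi(x')\}$, and evaluate the collar integral by a one-variable Taylor expansion in the $x_n$ direction combined with the identity $|z|^2(1+|z|^2)^{-n}=(1+|z|^2)^{-(n-1)}-(1+|z|^2)^{-n}$ to produce $c_0=c_1-c_2$. The only cosmetic difference is that the paper first substitutes $y=\lambda x_n/\sqrt{1+\lambda^2|x'|^2}$ and then rescales $z=\lambda x'$, whereas you rescale $y=\lambda x$ in one step; the estimates and constants are the same.
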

\begin{proof}
We begin with the identity
\begin{equation}\label{eq3.2}
\int_{\Omega} |\nabla w_\lambda|^2 dx = \int_{\Omega\cap B(0,\frac{\rho}{2})} |\nabla U_{(0,\lambda)}|^2 dx + O\!\left( \int_{|x| > \frac{\rho}{2}} |\nabla w_\lambda|^2 dx \right) =: K_1 + R_1.
\end{equation}
The remainder term \(R_1\) satisfies
\[
R_1 \leq C \left( \int_{|x| > \frac{\rho}{2}} |\nabla \delta_{(0,\lambda)}|^2 dx + \int_{|x| > \frac{\rho}{2}} |\nabla \delta_{(0,\lambda)}| \delta_{(0,\lambda)} dx + \int_{|x| > \frac{\rho}{2}} \delta_{(0,\lambda)}^2 dx \right),
\]
where \(\delta_{(0,\lambda)}\) is defined in \eqref{eq2.4}. Using the formula
\begin{equation}\label{eq2.9}
\nabla \delta_{(0,\lambda)}(x) = -(n-2)(n(n-2))^{\frac{n-2}{4}} \frac{\lambda^{\frac{n+2}{2}} x}{(1 + \lambda^2 |x|^2)^{\frac{n}{2}}},\quad x\in \mathbb{R}^n,
\end{equation}
we obtain
\begin{equation}\label{eq3.4}
R_1 = O\!\left( \frac{1}{(\lambda \rho)^{n-2}} \right) = O\!\left( \frac{1}{\lambda^{n-2}} \right),
\end{equation}
since \(\rho\) is a fixed constant independent of \(\lambda\).

We now estimate the main term \(K_1\) in \eqref{eq3.2}. Under condition \(\mathbf{(H_1)}\), we have \(\varphi(x') \geq 0\) for any \(x'\in\mathbb{R}^{n-1}\) and close to zero, so that
\[
B(0,\rho) \cap \Omega = \{(x', x_n) \in B(0,\rho) : x_n > \varphi(x')\} = B(0,\rho)^+ \cap \Omega,
\]
where \[
B(0,\rho)^+ = \{(x', x_n) \in B(0,\rho) : x_n > 0\}.
\]
 Define
\[
\Sigma := \{(x', x_n) \in B(0,\rho) : 0 < x_n \leq \varphi(x')\}.
\]
Then
\begin{equation}\label{eq3.5}
K_1 = \int_{B(0,\rho)^+} |\nabla U_{(0,\lambda)}|^2 dx - \int_{\Sigma} |\nabla U_{(0,\lambda)}|^2 dx.
\end{equation}
Notice that
\begin{equation}\label{eq3.6}
\begin{aligned}
\int_{B(0,\rho)^+} |\nabla U_{(0,\lambda)}|^2 \, dx
&= \frac{1}{2} \int_{B(0,\rho)} |\nabla U_{(0,\lambda)}|^2 \, dx \\
&= \frac{1}{2} \int_{\mathbb{R}^n} |\nabla U_{(0,\lambda)}|^2 \, dx
  + O\!\left( \frac{1}{\lambda^{n-2}} \right) \\
&= \frac{1}{2} S_{HL}^{\frac{2^*_{\mu}}{2^*_{\mu}-1}}
  + O\!\left( \frac{1}{\lambda^{n-2}} \right),
\end{aligned}
\end{equation}
since \(U_{(0,\lambda)}\) is a minimizer of \eqref{eq2.3} on \(\mathbb{R}^n\) and satisfies \eqref{eq2.6}.

Next we estimate the second integral in \eqref{eq3.5}. For a small \(\delta > 0\), set
\[
L_\delta = \{ x' = (x_1, \dots, x_{n-1}) \in \mathbb{R}^{n-1} : |x'| < \delta \},
\quad
\tilde{L}_\delta = \{ (x', x_n) \in \mathbb{R}^n : x' \in L_\delta \}.
\]
We split
\[
\int_{\Sigma} |\nabla U_{(0,\lambda)}|^2 dx = \int_{\Sigma \cap \tilde{L}_\delta} |\nabla U_{(0,\lambda)}|^2 dx + \int_{\Sigma \cap \tilde{L}_\delta^c} |\nabla U_{(0,\lambda)}|^2 dx
 \] 
 \[
 = \int_{\Sigma \cap \tilde{L}_\delta} |\nabla U_{(0,\lambda)}|^2 dx + O\!\left( \frac{1}{\lambda^{n-2}} \right).
\]
From \eqref{eq2.5} and \eqref{eq2.9} we obtain
\begin{equation}\label{eq2.13}
\begin{aligned}
\int_{\Sigma} |\nabla U_{(0,\lambda)}|^2 dx
&= \tilde{C}_{n,\mu}^2 (n-2)^2 (n(n-2))^{\frac{n-2}{2}} \int_{\Sigma \cap \tilde{L}_\delta} \frac{\lambda^{n+2} |x|^2}{(1+\lambda^2 |x|^2)^n} dx + O\!\left( \frac{1}{\lambda^{n-2}} \right) \\
&= \tilde{C}_{n,\mu}^2 (n-2)^2 (n(n-2))^{\frac{n-2}{2}} \Bigg[ \int_{\Sigma \cap \tilde{L}_\delta} \frac{\lambda^n}{(1+\lambda^2 |x|^2)^{n-1}} dx \\
&\quad - \int_{\Sigma \cap \tilde{L}_\delta} \frac{\lambda^n}{(1+\lambda^2 |x|^2)^n} dx \Bigg] + O\!\left( \frac{1}{\lambda^{n-2}} \right).
\end{aligned}
\end{equation}
The first integral in \eqref{eq2.13} can be rewritten as
\begin{equation*}
\begin{aligned}
\int_{\Sigma \cap \tilde{L}_\delta}
\frac{\lambda^n}{(1+\lambda^2 |x|^2)^{n-1}} \, dx
&= \int_{x' \in L_\delta} \int_0^{\varphi(x')}
\frac{\lambda^n}
     {(1+\lambda^2 |x'|^2 + \lambda^2 x_n^2)^{\,n-1}}
\, dx_n \, dx' \\
&= \int_{x' \in L_\delta} \int_0^{\varphi(x')}
\frac{\lambda^n}
     {(1+ \lambda^2 |x'|^2)^{\,n-1}
     \left[
     1 + \left(\frac{\lambda x_n}{\sqrt{1+\lambda^2 |x'|^2}}\right)^2
     \right]^{\,n-1}}
\, dx_n \, dx'.
\end{aligned}
\end{equation*}
With the substitution \(y = \frac{\lambda x_n}{\sqrt{1+\lambda^2 |x'|^2}}\), this becomes
\[
\int_{\Sigma \cap \tilde{L}_\delta} \frac{\lambda^n}{(1+\lambda^2 |x|^2)^{n-1}} dx = \int_{x' \in L_\delta} \int_0^{\frac{\lambda \varphi(x')}{\sqrt{1+\lambda^2 |x'|^2}}} \frac{\lambda^{n-1}}{(1+\lambda^2 |x'|^2)^{n-\frac{3}{2}}} \frac{dy}{(1+|y|^2)^{n-1}} dx'.
\]
For any \(\tilde{r} > 0\), a Taylor expansion around $0$ gives
\[
\int_0^T \frac{ds}{(1+s^2)^{\tilde{r}}} = T + O(T^3).
\]
Consequently,
\[
\int_{\Sigma \cap \tilde{L}_\delta} \frac{\lambda^n}{(1+\lambda^2 |x|^2)^{n-1}} dx = \int_{x' \in L_\delta} \frac{\lambda^n \varphi(x')}{(1+\lambda^2 |x'|^2)^{{n-1}}} dx' + O\!\left( \int_{x' \in L_\delta} \frac{\lambda^{n+2} \varphi(x')^3}{(1+\lambda^2 |x'|^2)^{{n}}} dx' \right).
\]
Using the expansion \eqref{1.1}, we obtain
\begin{equation*}
\begin{aligned}
\int_{\Sigma \cap \tilde{L}_\delta}
\frac{\lambda^n}{(1+\lambda^2 |x|^2)^{n-1}} \, dx
&= \sum_{k=1}^{n-1} \gamma_k
\int_{x' \in L_\delta}
\frac{\lambda^n |x'_k|^\beta}
     {(1+\lambda^2 |x'|^2)^{n-1}} \, dx'
 + o\!\left(
\int_{x' \in L_\delta}
\frac{\lambda^n |x'|^{\beta}}
     {(1+\lambda^2 |x'|^2)^{n-1}} \, dx'
\right) \\
&\quad + O\!\left(
\int_{x' \in L_\delta}
\frac{\lambda^{n+2} |x'|^{3\beta}}
     {(1+\lambda^2 |x'|^2)^{n}} \, dx'
\right).
\end{aligned}
\end{equation*}
Observe that for small \(\delta\),
\[
\frac{\lambda^2 |x'|^{3\beta}}{(1+\lambda^2 |x'|^2)^{{n}}} = o\left( \frac{|x'|^\beta}{(1+\lambda^2 |x'|^2)^{{n-1}}} \right),
\]
since
\[
\frac{\lambda^2 |x'|^{3\beta}}{(1+\lambda^2 |x'|^2)^{{n}}} \cdot \frac{(1+\lambda^2 |x'|^2)^{{n-1}}}{|x'|^\beta} = \frac{\lambda^2 |x'|^{2}}{1+\lambda^2 |x'|^{2}}|x'|^{2(\beta -1)} = O(\delta^{2(\beta-1)}) \to 0 \quad \text{as } \delta \to 0.
\]
Setting \(z=\lambda x'\), we find for \(\beta < n-1\),
\begin{equation}\label{eq2.14}
\int_{\Sigma \cap \tilde{L}_\delta} \frac{\lambda^n}{(1+\lambda^2 |x|^2)^{n-1}} dx = \frac{\sum_{k=1}^{n-1} \gamma_k}{\lambda^{\beta-1}} \int_{\mathbb{R}^{n-1}} \frac{|z_1|^\beta}{(1+|z|^2)^{n-1}} dz + o\left( \frac{1}{\lambda^{\beta-1}} \right),
\end{equation}
and for \(\beta = n-1\)
\begin{equation}\label{eq2.144}
\int_{\Sigma \cap \tilde{L}_\delta} \frac{\lambda^n}{(1+\lambda^2 |x|^2)^{n-1}} dx = \hat{c} \left( \sum_{k=1}^{n-1} \gamma_k \right) \frac{\log \lambda}{\lambda^{\beta-1}} + o\left( \frac{\log \lambda}{\lambda^{\beta-1}} \right).
\end{equation}
A similar computation yields for the second integral in \eqref{eq2.13}
\begin{equation}\label{eq2.15}
\int_{\Sigma \cap \tilde{L}_\delta} \frac{\lambda^n}{(1+\lambda^2 |x|^2)^n} dx = \frac{\sum_{k=1}^{n-1} \gamma_k}{\lambda^{\beta-1}} \int_{\mathbb{R}^{n-1}} \frac{|z_1|^\beta}{(1+|z|^2)^n} dz + o\left( \frac{1}{\lambda^{\beta-1}} \right),
\end{equation}
for any \(\beta \leq n-1\).

Combining \eqref{eq2.13}, \eqref{eq2.14} and \eqref{eq2.15}, we obtain for \(\beta < n-1\),
\begin{equation}\label{eq2.16}
\begin{aligned}
\int_{\Sigma} |\nabla U_{(0,\lambda)}|^2 dx
&= \tilde{C}_{n,\mu}^2 (n-2)^2 (n(n-2))^{\frac{n-2}{2}} \frac{\sum_{k=1}^{n-1} \gamma_k}{\lambda^{\beta-1}} \\
&\quad\times \left( \int_{\mathbb{R}^{n-1}} \frac{|z_1|^\beta}{(1+|z|^2)^{n-1}} dz - \int_{\mathbb{R}^{n-1}} \frac{|z_1|^\beta}{(1+|z|^2)^n} dz \right)
+ O\!\left( \frac{1}{\lambda^{n-2}} \right).
\end{aligned}
\end{equation}
For \(\beta = n-1\), we have
\begin{equation}\label{eq2.166}
\int_{\Sigma} |\nabla U_{(0,\lambda)}|^2 dx = \hat{c}  \left( \sum_{k=1}^{n-1} \gamma_k \right) \frac{\log \lambda}{\lambda^{\beta-1}} + o\!\left( \frac{\log \lambda}{\lambda^{\beta-1}} \right).
\end{equation}
Inserting these estimates together with \eqref{eq3.6} into \eqref{eq3.5} gives for \(\beta < n-1\),
\begin{equation}\label{eq2.17}
\begin{aligned}
K_1 &= \frac{1}{2} S_{HL}^{\frac{2^*_{\mu}}{2^*_{\mu}-1}}
- \tilde{C}_{n,\mu}^2 (n-2)^2 (n(n-2))^{\frac{n-2}{2}} \frac{\sum_{k=1}^{n-1} \gamma_k}{\lambda^{\beta-1}} \\
&\quad\times \left( \int_{\mathbb{R}^{n-1}} \frac{|z_1|^\beta}{(1+|z|^2)^{n-1}} dz - \int_{\mathbb{R}^{n-1}} \frac{|z_1|^\beta}{(1+|z|^2)^n} dz \right)
+ O\!\left( \frac{1}{\lambda^{n-2}} \right),
\end{aligned}
\end{equation}
and for \(\beta = n-1\)
\begin{equation}\label{eq2.177}
K_1 = \frac{1}{2} S_{HL}^{\frac{2^*_{\mu}}{2^*_{\mu}-1}} - \hat{c}  \left( \sum_{k=1}^{n-1} \gamma_k \right) \frac{\log \lambda}{\lambda^{\beta-1}} + o\left( \frac{\log \lambda}{\lambda^{\beta-1}} \right).
\end{equation}
Finally, putting \eqref{eq3.4}, \eqref{eq2.17} and \eqref{eq2.177} into \eqref{eq3.2} completes the proof of the lemma.
\end{proof}
\begin{lemma}\label{lem3.2}
The following two estimates hold:
\[
\begin{aligned}
&(E_1)\quad
\int_{\Omega} V(x)\, w_\lambda^2(x) \, dx
= \int_{\Omega \cap B\!\left(0,\frac{\rho}{2}\right)}
V(x)\, U_{(0,\lambda)}^2(x) \, dx
+ O\!\left( \frac{1}{\lambda^{n-2}} \right), \\[6pt]
&(E_2)\quad
\int_{\Omega} V(x)\, w_\lambda^2(x) \, dx =
\begin{cases}
O\!\left( \dfrac{1}{\lambda^{2}} \right), & n \geq 5, \\[6pt]
O\!\left( \dfrac{\log \lambda}{\lambda^{2}} \right), & n = 4, \\[6pt]
O\!\left( \dfrac{1}{\lambda} \right), & n = 3 .
\end{cases}
\end{aligned}
\]
\end{lemma}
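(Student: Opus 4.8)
The plan is to reduce both estimates to crude bounds: the pointwise inequality $|V(x)|\le\|V\|_{L^{\infty}(\Omega)}$, which is available because $V\in C(\overline{\Omega})$, together with $0\le\psi\le1$ and the fact that $w_\lambda$ is supported in $B(0,\rho)$, combined with the explicit form of the bubble $U_{(0,\lambda)}=\widetilde{C}_{n,\mu}\,\delta_{(0,\lambda)}$ from \eqref{eq2.4} and \eqref{eq2.5}, whose $L^2$-mass rescales in a dimension-dependent fashion.

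For $(E_1)$, I would split $\int_{\Omega}V\,w_\lambda^2$ into the piece over $\Omega\cap B(0,\tfrac{\rho}{2})$, on which $\psi\equiv1$ and hence $w_\lambda=U_{(0,\lambda)}$, and the piece over $\{x\in\Omega:|x|>\tfrac{\rho}{2}\}$. On the latter, $|w_\lambda|\le|U_{(0,\lambda)}|$ and, by the support restriction, the integral is confined to the annulus $\tfrac{\rho}{2}<|x|<\rho$, so the difference between the two sides of $(E_1)$ is at most $\|V\|_{L^{\infty}(\Omega)}\int_{\rho/2<|x|<\rho}U_{(0,\lambda)}^2\,dx$. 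Plugging in \eqref{eq2.4} and substituting $y=\lambda x$ turns this, up to a constant, into $\lambda^{-2}\int_{\lambda\rho/2<|y|<\lambda\rho}(1+|y|^2)^{-(n-2)}\,dy$; since $\rho$ is fixed and the radial density decays like $r^{3-n}$, the annular integral is $O(\lambda^{4-n})$ for $n\ge5$, $O(1)$ for $n=4$, and $O(\lambda)$ for $n=3$, so in each case the product is $O(\lambda^{-(n-2)})$, which is precisely the remainder claimed in $(E_1)$.

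For $(E_2)$, I would combine $(E_1)$ with a direct estimate of the leftover main term. Using $|V|\le\|V\|_{L^{\infty}(\Omega)}$ once more and enlarging $\Omega\cap B(0,\tfrac{\rho}{2})$ to the full ball, the main term is bounded by $C\int_{B(0,\rho/2)}\delta_{(0,\lambda)}^2\,dx$, and the substitution $y=\lambda x$ converts this into $C\lambda^{-2}\int_{|y|<\lambda\rho/2}(1+|y|^2)^{-(n-2)}\,dy$. The truncated integral has radial density $O(r^{n-1})$ near the origin and $\sim r^{3-n}$ at infinity, hence equals $O(1)$ for $n\ge5$, $O(\log\lambda)$ for $n=4$, and $O(\lambda)$ for $n=3$; the main term is therefore $O(\lambda^{-2})$, $O(\lambda^{-2}\log\lambda)$, and $O(\lambda^{-1})$, respectively. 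Adding the $O(\lambda^{-(n-2)})$ error from $(E_1)$ does not change these orders — for $n\ge5$ we have $n-2>2$ so the main term wins, for $n=4$ the logarithm dominates $\lambda^{-2}$, and for $n=3$ both terms are already $O(\lambda^{-1})$ — which gives exactly the three cases in $(E_2)$.

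The whole argument relies only on elementary bounds and an explicit rescaling, so no genuine obstacle is expected. The points requiring attention are keeping track of the threshold dimension $n=4$, where $\int_{|y|<\lambda\rho/2}(1+|y|^2)^{-(n-2)}\,dy$ diverges logarithmically instead of staying bounded (as for $n\ge5$) or growing linearly (as for $n=3$), and checking that the leading term dominates the $O(\lambda^{-(n-2)})$ remainder from $(E_1)$, which is what makes the two estimates mutually consistent.
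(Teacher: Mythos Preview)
Your proposal is correct and follows essentially the same route as the paper: split the integral at $|x|=\rho/2$, use $|V|\le\|V\|_{L^\infty}$, rescale $y=\lambda x$, and read off the dimension-dependent behaviour of $\int(1+|y|^2)^{-(n-2)}\,dy$ over the relevant region. Your case-by-case handling of the annular integral in $(E_1)$ and your explicit check that the $O(\lambda^{-(n-2)})$ remainder is dominated by the main term in $(E_2)$ are slightly more detailed than the paper's write-up, but the argument is the same.
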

\begin{proof}
We start by writing
\begin{equation}\label{eq3.15}
\int_{\Omega} V(x)\, w_\lambda^2(x) \, dx
= \int_{\Omega \cap B\!\left(0,\frac{\rho}{2}\right)}
V(x)\, U_{(0,\lambda)}^2(x) \, dx
+ O\!\left(
\int_{\frac{\rho}{2}<|x|<\rho}
\delta_{(0,\lambda)}^2(x) \, dx
\right).
\end{equation}
We observe that
\begin{equation}\label{eq3.16}
\begin{aligned}
\int_{\frac{\rho}{2}<|x|<\rho}
\delta_{(0,\lambda)}^2(x) \, dx
&= (n(n-2))^{\frac{n-2}{2}}
\int_{\frac{\rho}{2}<|x|<\rho}
\frac{\lambda^{n-2}}{(1+\lambda^2 |x|^2)^{n-2}} \, dx \\
&= (n(n-2))^2 \frac{1}{\lambda^2}
\int_{\frac{\lambda\rho}{2}<|z|<\lambda\rho}
\frac{dz}{(1+|z|^2)^{n-2}} \\
&= O\!\left( \frac{1}{\lambda^2} \right)
\int_{\frac{\lambda\rho}{2}}^{\lambda\rho}
\frac{dr}{r^{\,n-3}} \\
&= O\!\left( \frac{1}{\lambda^{n-2}} \right).
\end{aligned}
\end{equation}
The estimate \((E_1)\) follows directly from
\eqref{eq3.15} and \eqref{eq3.16}.

\medskip
We now prove the estimate \((E_2)\).
Since \(V(x)\) is bounded in \(\Omega\), we obtain
\begin{equation}\label{eq3.17}
\begin{aligned}
\int_{\Omega \cap B\!\left(0,\frac{\rho}{2}\right)}
V(x)\, U_{(0,\lambda)}^2(x) \, dx
&= O\!\left(
\int_{|x|<\frac{\rho}{2}}
\frac{\lambda^{n-2}}{(1+\lambda^2 |x|^2)^{n-2}} \, dx
\right) \\
&= O\!\left(
\frac{1}{\lambda^2}
\int_{|z|<\frac{\lambda\rho}{2}}
\frac{dz}{(1+|z|^2)^{n-2}}
\right).
\end{aligned}
\end{equation}
Let \(A > 0\) be a sufficiently large fixed constant. Then
\begin{equation}\label{eq3.18}
\begin{aligned}
\int_{|z|<\frac{\lambda\rho}{2}}
\frac{dz}{(1+|z|^2)^{n-2}}
&= O\!\left(
\int_{0}^{A}
\frac{r^{n-1}}{(1+r^2)^{n-2}} \, dr
\right)
+ O\!\left(
\int_{A}^{\lambda\rho}
\frac{dr}{r^{\,n-3}}
\right) \\
&= O(1) +
\begin{cases}
O\!\left( \dfrac{1}{\lambda^{n-4}} \right), & n \geq 5, \\[6pt]
O(\log \lambda), & n = 4, \\[6pt]
O(\lambda), & n = 3 .
\end{cases}
\end{aligned}
\end{equation}
Combining \eqref{eq3.15}--\eqref{eq3.18}, the estimate \((E_2)\) follows.
This completes the proof of this lemma.
\end{proof}

%%%%%%%%%%%%%%%%%%%%%%%%%%%%%%%%%%%%%%%%%%%%%%%%%%%%%%%%%%%%%%%%%%%%%%%%%%%%%%%%%%%%%%%%%%%
\begin{lemma}\label{lem3.3}
Under the assumptions of Theorems \ref{th1.3} and \ref{th1.4}, the following estimates hold:
\begin{equation}
\begin{aligned}
\|w_\lambda\|_{V(\Omega)}^2 \leq\;
&\frac{1}{2} S_{HL}^{\frac{2^*_{\mu}}{2^*_{\mu}-1}}
\Bigg(
1 -
\frac{
2(c_1 - c_2)\tilde{C}_{n,\mu}^2 (n-2)^2 (n(n-2))^{\frac{n-2}{2}}
\sum_{k=1}^{n-1} \gamma_k
}{
S_{HL}^{\frac{2^*_{\mu}}{2^*_{\mu}-1}} \lambda^{\beta-1}
}
\Bigg)  \\
&\quad +\; o\!\left( \frac{1}{\lambda^{\beta-1}} \right),
\qquad \beta < n-1,
\end{aligned}
\end{equation}
and
\begin{equation}
\|w_\lambda\|_{V(\Omega)}^2 \leq
\frac{1}{2} S_{HL}^{\frac{2^*_{\mu}}{2^*_{\mu}-1}}
\left(
1 - \frac{2 \hat{c} \sum_{k=1}^{n-1} \gamma_k}
{S_{HL}^{\frac{2^*_{\mu}}{2^*_{\mu}-1}}}
\frac{\log \lambda}{\lambda^{\beta-1}}
\right)
+ o\left( \frac{\log \lambda}{\lambda^{\beta-1}} \right),
\quad \beta = n-1.
\end{equation}
The constants \(c_1\), \(c_2\), and \(\hat{c}\) are defined in Lemma \ref{lem3.1}.
\end{lemma}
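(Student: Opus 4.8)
The plan is to split
$\|w_\lambda\|_{V(\Omega)}^2 = \int_{\Omega}|\nabla w_\lambda|^2\,dx + \int_{\Omega}V(x)w_\lambda^2\,dx$
and to handle the two pieces with Lemmas \ref{lem3.1} and \ref{lem3.2} respectively. Lemma \ref{lem3.1} already supplies the complete asymptotics of the gradient term, the nontrivial content being the negative geometric correction of order $\lambda^{-(\beta-1)}$ (resp.\ $\lambda^{-(\beta-1)}\log\lambda$ when $\beta=n-1$). Hence the whole lemma reduces to showing that the potential contribution $\int_{\Omega}V w_\lambda^2\,dx$ is either $o$ of that correction or at least bounded above by such a quantity, after which one adds the two expansions and factors $\tfrac12 S_{HL}^{\frac{2^*_\mu}{2^*_\mu-1}}$ out of the two leading terms, using $c_0=c_1-c_2>0$ and $\sum_{k}\gamma_k>0$.

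Under the hypotheses of Theorem \ref{th1.4}, I would use estimate $(E_1)$ of Lemma \ref{lem3.2}: since $V\le 0$ on $B(a,\rho)\cap\Omega$ and $B(0,\rho/2)\cap\Omega\subset B(a,\rho)\cap\Omega$, the main term $\int_{\Omega\cap B(0,\rho/2)}V\,U_{(0,\lambda)}^2\,dx$ is $\le 0$, so $\int_{\Omega}V w_\lambda^2\,dx\le O(\lambda^{-(n-2)})$. Because $\beta\le n-1$, this remainder is $o(\lambda^{-(\beta-1)})$ when $\beta<n-1$ and $o(\lambda^{-(\beta-1)}\log\lambda)$ when $\beta=n-1$; here one should invoke the sharper remainder $o(\lambda^{-(\beta-1)}\log\lambda)$ that actually comes out of the proof of Lemma \ref{lem3.1}, see \eqref{eq2.177}. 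Adding this to the expansions of Lemma \ref{lem3.1} leaves the claimed bounds unchanged, and the one-sided sign control is precisely why the conclusion is stated as an inequality.

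Under the hypotheses of Theorem \ref{th1.3} ($\beta<3$, no sign restriction on $V$), I would instead use estimate $(E_2)$ and argue by dimension. For $n\ge 5$ one has $\beta<3\le n-1$ and $\int_{\Omega}V w_\lambda^2\,dx=O(\lambda^{-2})=o(\lambda^{-(\beta-1)})$ since $\beta-1<2$. For $n=4$ one has $\beta<3=n-1$ and $\int_{\Omega}V w_\lambda^2\,dx=O(\lambda^{-2}\log\lambda)=o(\lambda^{-(\beta-1)})$ since $3-\beta>0$. For $n=3$ the potential term is $O(\lambda^{-1})$; if $\beta<2$ this is $o(\lambda^{-(\beta-1)})$ because $2-\beta>0$, while if $\beta=2=n-1$ it is $o(\lambda^{-1}\log\lambda)$, matching the logarithmic branch of Lemma \ref{lem3.1}. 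In each case the potential term is negligible against the geometric correction, and the desired estimates follow by substitution into $\|w_\lambda\|_{V(\Omega)}^2=\int_{\Omega}|\nabla w_\lambda|^2\,dx+\int_{\Omega}V w_\lambda^2\,dx$ and the algebraic rearrangement above.

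The only genuinely delicate point is the order bookkeeping in the Theorem \ref{th1.3} case: one must verify, for every admissible pair $(n,\beta)$ (recalling the restrictions on $\mu$ and $\beta$), that the dimension-dependent bound for $\int_{\Omega}V w_\lambda^2\,dx$ from $(E_2)$ is of strictly smaller order than the geometric term of Lemma \ref{lem3.1}, including the borderline configuration $n=3$, $\beta=n-1$, where the comparison is with $\lambda^{-(\beta-1)}\log\lambda$ rather than a pure power, and that the remainder $O(\lambda^{-(n-2)})$ already present in Lemma \ref{lem3.1} is itself absorbed in the same way. Everything else is routine.
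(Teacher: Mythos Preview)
Your proposal is correct and follows essentially the same route as the paper: split $\|w_\lambda\|_{V(\Omega)}^2$ into the gradient and potential parts, quote Lemma~\ref{lem3.1} for the first, and use $(E_1)$ (with $V\le 0$) or $(E_2)$ (with the dimension-by-dimension comparison against $\lambda^{-(\beta-1)}$ or $\lambda^{-(\beta-1)}\log\lambda$) for the second. The case analysis you give for $n\ge 5$, $n=4$, and $n=3$ (including the borderline $\beta=n-1=2$ when $n=3$) matches the paper's, and your remark that the $\beta=n-1$ remainder in Lemma~\ref{lem3.1} is in fact $o(\lambda^{-(\beta-1)}\log\lambda)$, as in \eqref{eq2.177}, is exactly what the paper uses in \eqref{eq3.22}.
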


\begin{proof}
In \(V(\Omega)\), we have
\[
\|w_\lambda\|_{V(\Omega)}^2
= \int_{\Omega} |\nabla w_\lambda|^2 \, dx
+ \int_{\Omega} V(x) w_\lambda^2(x) \, dx.
\]
Using the first expansion given in Lemma \ref{lem3.1}, we have
\begin{equation}\label{eq3.21}
\begin{aligned}
\int_{\Omega} |\nabla w_\lambda|^2 \, dx
=\;& \frac{1}{2} S_{HL}^{\frac{2^*_{\mu}}{2^*_{\mu}-1}}
\Bigg(
1 -
\frac{
2(c_1 - c_2)\tilde{C}_{n,\mu}^2 (n-2)^2 (n(n-2))^{\frac{n-2}{2}}
\sum_{k=1}^{n-1} \gamma_k
}{
S_{HL}^{\frac{2^*_{\mu}}{2^*_{\mu}-1}} \lambda^{\beta-1}
}
\Bigg)  \\
&\quad +\; o\!\left( \frac{1}{\lambda^{\beta-1}} \right),
\qquad \beta < n-1 .
\end{aligned}
\end{equation}
Moreover, by the second expansion of Lemma \ref{lem3.1}, we have
\begin{equation}\label{eq3.22}
\int_{\Omega} |\nabla w_\lambda|^2 \, dx
= \frac{1}{2} S_{HL}^{\frac{2^*_{\mu}}{2^*_{\mu}-1}}
\left(
1 - \frac{2 \hat{c} \sum_{k=1}^{n-1} \gamma_k}
{S_{HL}^{\frac{2^*_{\mu}}{2^*_{\mu}-1}}}
\frac{\log \lambda}{\lambda^{\beta-1}}
\right)
+ o\left( \frac{\log \lambda}{\lambda^{\beta-1}} \right),
\quad \beta = n-1.
\end{equation}
Assume first that the hypotheses of Theorem \ref{th1.3} are satisfied, namely the
\(\mathbf{(H_1)}\) condition with \(\beta < 3\). Then, for \(n \geq 4\), we have
\begin{equation}\label{eq3.23}
\int_{\Omega} V(x) w_\lambda^2(x) \, dx
= o\!\left( \frac{1}{\lambda^{\beta-1}} \right).
\end{equation}
Indeed, by estimate \((E_2)\) of Lemma \ref{lem3.2}, we have
\(\lambda^{-2} = o(\lambda^{-(\beta-1)})\) for \(n \geq 5\), and
\(\log \lambda / \lambda = o(\lambda^{-(\beta-1)})\) for \(n = 4\).

If \(n = 3\), again by \((E_2)\) of Lemma \ref{lem3.2}, we have
\begin{equation}\label{eq3.24}
\int_{\Omega} V(x) w_\lambda^2(x) \, dx =
\begin{cases}
o\left( \dfrac{1}{\lambda^{\beta-1}} \right), & \beta < n-1, \\[6pt]
o\left( \dfrac{\log \lambda}{\lambda^{\beta-1}} \right), & \beta = n-1.
\end{cases}
\end{equation}
Combining \eqref{eq3.21}--\eqref{eq3.24}, the desired estimates of the Lemma follow in this case.

Now assume that the hypotheses of Theorem \ref{th1.4} hold. Using estimate \((E_1)\)
of Lemma \ref{lem3.2}, we have
\begin{equation}\label{eq3.25}
\int_{\Omega} V(x) w_\lambda^2(x) \, dx
\leq O\!\left( \frac{1}{\lambda^{n-2}} \right)
=
\begin{cases}
o\left( \dfrac{1}{\lambda^{\beta-1}} \right), & \beta < n-1, \\[6pt]
o\left( \dfrac{\log \lambda}{\lambda^{\beta-1}} \right), & \beta = n-1.
\end{cases}
\end{equation}
The conclusion of this Lemma  follows from
\eqref{eq3.21}, \eqref{eq3.22}, and \eqref{eq3.25}.
\end{proof}

\begin{lemma}\label{lem3.4}
Under the assumptions of Theorems \ref{th1.3} and \ref{th1.4}, we have
\[
\|w_\lambda\|_{HLS}^2
= \left( \frac{1}{2} \right)^{\frac{2}{2^*_\mu}}
S_{HL}^{\frac{1}{2^*_\mu -1}}
\left(
1 - \frac{4 c_2 \tilde{C}_{n,\mu}^2 (n(n-2))^{\frac{n}{2}}
\sum_{k=1}^{n-1} \gamma_k}
{2^*_\mu S_{HL}^{\frac{2^*_\mu}{2^*_\mu-1}} \lambda^{\beta-1}}
\right)
+ o\left( \frac{1}{\lambda^{\beta-1}} \right),
\]
where \(\tilde{C}_{n,\mu}\) is defined in \eqref{eq2.5} and \(c_2\) is given in Lemma \ref{lem3.1}.
\end{lemma}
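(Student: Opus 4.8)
The plan is to expand the double integral
\(T(w_\lambda):=\int_\Omega\int_\Omega \frac{w_\lambda^{2^*_\mu}(y)\,w_\lambda^{2^*_\mu}(x)}{|x-y|^\mu}\,dx\,dy\)
with an error \(o(\lambda^{-(\beta-1)})\), since \(\|w_\lambda\|_{HLS}^2=T(w_\lambda)^{1/2^*_\mu}\); the expansion of \(\|w_\lambda\|_{HLS}^2\) then follows by the Taylor formula for \(t\mapsto t^{1/2^*_\mu}\) together with the identity \((\tfrac14)^{1/2^*_\mu}=(\tfrac12)^{2/2^*_\mu}\), exactly as \(\|w_\lambda\|_{V(\Omega)}^2\) was handled in Lemma \ref{lem3.3}. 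First I would \emph{localize}. Since \(w_\lambda\equiv U_{(0,\lambda)}\) on \(B(0,\rho/2)\), \(0\le w_\lambda\le U_{(0,\lambda)}\) on \(B(0,\rho)\), and \(w_\lambda\equiv0\) outside \(B(0,\rho)\), while on the annulus \(\{\rho/2<|x|<\rho\}\) one has \(U_{(0,\lambda)}=O(\lambda^{-(n-2)/2})\) pointwise and \(\int_\Omega U_{(0,\lambda)}^{2^*_\mu}=O(\lambda^{-\mu/2})\), every contribution to \(T(w_\lambda)\) in which a variable lies in the annulus is \(O(\lambda^{-n})=o(\lambda^{-(\beta-1)})\) (recall \(\beta\le n-1\)). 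Hence \(T(w_\lambda)\) equals \(\int_{\Omega\cap B(0,\rho/2)}\int_{\Omega\cap B(0,\rho/2)}\frac{U_{(0,\lambda)}^{2^*_\mu}(y)\,U_{(0,\lambda)}^{2^*_\mu}(x)}{|x-y|^\mu}\,dx\,dy\) up to \(o(\lambda^{-(\beta-1)})\).

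Next, as in the proof of Lemma \ref{lem3.1}, \(\mathbf{(H_1)}\) gives \(\varphi\ge0\) near \(0\), so \(\Omega\cap B(0,\rho/2)=B^+\setminus\Sigma\), where \(B^+=B(0,\rho/2)^+\) and \(\Sigma=\{0<x_n\le\varphi(x')\}\cap B(0,\rho/2)\) is the thin layer of Lemma \ref{lem3.1}. Writing \(\chi_{B^+\setminus\Sigma}=\chi_{B^+}-\chi_\Sigma\) in both variables splits the double integral as \(A_\lambda-2B_\lambda+C_\lambda\), where (with \(U=U_{(0,\lambda)}\)) \(A_\lambda\), \(B_\lambda\), \(C_\lambda\) denote the integral of \(\frac{U^{2^*_\mu}(y)U^{2^*_\mu}(x)}{|x-y|^\mu}\) over \(B^+\times B^+\), over \(\Sigma\times B^+\), and over \(\Sigma\times\Sigma\), respectively. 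For \(A_\lambda\) I would extend both integrations to \(\mathbb{R}^n_+\) (the tails being \(o(\lambda^{-(\beta-1)})\) as above) and use that \(U_{(0,\lambda)}^{2^*_\mu}\) is even in \(x_n\), that \(U_{(0,\lambda)}\) minimizes \(S_{HL}\), and that it solves \eqref{eq2.6} with \(\partial_{x_n}U_{(0,\lambda)}=0\) on \(\{x_n=0\}\); the resulting reflection argument — the exact analogue of the one leading to \eqref{eq3.6} — gives \(A_\lambda=\tfrac14 S_{HL}^{2^*_\mu/(2^*_\mu-1)}+o(\lambda^{-(\beta-1)})\). The term \(C_\lambda\) is negligible: by the Hardy--Littlewood--Sobolev inequality (Lemma \ref{lem:HLS} with \(t_1=t_2=\tfrac{2n}{2n-\mu}\)) one has \(C_\lambda\le C\,\|U_{(0,\lambda)}\|_{L^{2^*}(\Sigma)}^{2\cdot 2^*_\mu}\), and since \(\int_\Sigma U_{(0,\lambda)}^{2^*}=\widetilde{C}_{n,\mu}^{2^*}(n(n-2))^{n/2}\int_\Sigma\frac{\lambda^n}{(1+\lambda^2|x|^2)^n}\,dx=O(\lambda^{-(\beta-1)})\) by \eqref{eq2.15}, this yields \(C_\lambda=O(\lambda^{-(\beta-1)(2-\mu/n)})=o(\lambda^{-(\beta-1)})\).

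The heart of the estimate is the cross term \(B_\lambda=\int_\Sigma U_{(0,\lambda)}^{2^*_\mu}(x)\bigl[\int_{B^+}\frac{U_{(0,\lambda)}^{2^*_\mu}(y)}{|x-y|^\mu}\,dy\bigr]dx\), which carries the boundary geometry. For \(x\in\Sigma\) its distance to the hyperplane \(\{x_n=0\}\) is \(x_n\le\varphi(x')\sim\sum_{k=1}^{n-1}\gamma_k|x_k'|^\beta=o(|x'|)\), and because \(U_{(0,\lambda)}\) concentrates at scale \(\lambda^{-1}\) this forces \(x_n=O(\lambda^{-\beta})\) on the bulk of \(\Sigma\). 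Reflecting \(y\mapsto(y',-y_n)\) and extending to \(\mathbb{R}^n\) then gives \(\int_{B^+}\frac{U_{(0,\lambda)}^{2^*_\mu}(y)}{|x-y|^\mu}\,dy=\tfrac12\bigl(I_\mu*U_{(0,\lambda)}^{2^*_\mu}\bigr)(x)+\mathcal{E}_\lambda(x)\), where \(|\mathcal{E}_\lambda(x)|\le C\,x_n\int\frac{y_n\,U_{(0,\lambda)}^{2^*_\mu}(y)}{|x-y|^{\mu+2}}\,dy+(\text{tails})\) is of relative size \(O(\lambda^{1-\beta})\) on \(\Sigma\). Inserting this into \(B_\lambda\) and using \eqref{eq2.6}, i.e. \(\bigl(I_\mu*U_{(0,\lambda)}^{2^*_\mu}\bigr)\,U_{(0,\lambda)}^{2^*_\mu-1}=-\Delta U_{(0,\lambda)}\), together with \(U_{(0,\lambda)}=\widetilde{C}_{n,\mu}\,\delta_{(0,\lambda)}\) and the Aubin--Talenti identity \(-\Delta\delta_{(0,\lambda)}=\delta_{(0,\lambda)}^{(n+2)/(n-2)}\), the leading integrand collapses to \(\tfrac12\,U_{(0,\lambda)}^{2^*_\mu}\bigl(I_\mu*U_{(0,\lambda)}^{2^*_\mu}\bigr)=\tfrac12\,\widetilde{C}_{n,\mu}^2\,\delta_{(0,\lambda)}^{2n/(n-2)}=\tfrac12\,\widetilde{C}_{n,\mu}^2\,(n(n-2))^{n/2}\frac{\lambda^n}{(1+\lambda^2|x|^2)^n}\). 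Integrating over \(\Sigma\) and invoking \eqref{eq2.15},
\[
B_\lambda=\tfrac12\,c_2\,\widetilde{C}_{n,\mu}^2\,(n(n-2))^{n/2}\,\frac{\sum_{k=1}^{n-1}\gamma_k}{\lambda^{\beta-1}}+o\!\left(\frac{1}{\lambda^{\beta-1}}\right),
\]
while the contribution of \(\mathcal{E}_\lambda\), which carries the extra factor \(x_n\le\varphi(x')\), is \(o(\lambda^{-(\beta-1)})\).

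Combining the three pieces gives \(T(w_\lambda)=\tfrac14 S_{HL}^{2^*_\mu/(2^*_\mu-1)}-c_2\,\widetilde{C}_{n,\mu}^2\,(n(n-2))^{n/2}\,\lambda^{-(\beta-1)}\sum_{k=1}^{n-1}\gamma_k+o(\lambda^{-(\beta-1)})\), and raising to the power \(1/2^*_\mu\) (using \((\tfrac14)^{1/2^*_\mu}=(\tfrac12)^{2/2^*_\mu}\)) yields the asserted expansion of \(\|w_\lambda\|_{HLS}^2\). The step I expect to be the main obstacle is the treatment of \(B_\lambda\): making the reduction \(\int_{B^+}|x-y|^{-\mu}U_{(0,\lambda)}^{2^*_\mu}(y)\,dy\approx\tfrac12\bigl(I_\mu*U_{(0,\lambda)}^{2^*_\mu}\bigr)(x)\) quantitative and uniform over the thin layer \(\Sigma\), and controlling the error \(\int_\Sigma U_{(0,\lambda)}^{2^*_\mu}\mathcal{E}_\lambda\) against \(\lambda^{-(\beta-1)}\) — where the decisive gain is precisely that points of \(\Sigma\) lie within distance \(O(\varphi(x'))=O(|x'|^\beta)\) of the symmetry hyperplane — while the remaining bookkeeping (the annulus and half‑space tail estimates, the corresponding reflection identity for \(A_\lambda\), and the scaling \(z=\lambda x'\) inside \(\Sigma\)) is routine and parallels Lemma \ref{lem3.1}.
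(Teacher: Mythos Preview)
Your overall architecture (localize to $B(0,\rho/2)$, split off the thin layer $\Sigma$, bound $C_\lambda$ by Hardy--Littlewood--Sobolev, then take the $1/2^*_\mu$-th power) matches the paper's. The substantive difference is in how the inner integral is extended. The paper does \emph{not} work with $\int_{B^+}\int_{B^+}$ at all: its decomposition \eqref{eq3.26} (imported from \cite{giacomoni2023critical}) keeps the inner $y$-integral over the \emph{full} ball, so that after extending to $\mathbb{R}^n$ one may invoke the exact identity
\[
\bigl(I_\mu * U_{(0,\lambda)}^{2^*_\mu}\bigr)(x)=\tilde{C}_{n,\mu}^{-4/(n-2)}\,U_{(0,\lambda)}^{2^*-2^*_\mu}(x)
\]
(this is \eqref{eq3.28}). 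The cross term then collapses directly to $\tilde{C}_{n,\mu}^{-4/(n-2)}\int_\Sigma U_{(0,\lambda)}^{2^*}$, a \emph{single} integral of a radial function, and the leading term is likewise a single integral; at that stage the ordinary reflection $\int_{B^+}=\tfrac12\int_B$ is legitimate. No half-space approximation and no error term $\mathcal{E}_\lambda$ are needed.

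Your treatment of $B_\lambda$ via $\mathcal{E}_\lambda$ is a valid alternative (the smallness of $x_n$ on $\Sigma$ really does make $\mathcal{E}_\lambda$ lower order there), though more laborious than the paper's direct use of \eqref{eq3.28}. The \textbf{genuine gap} is your handling of $A_\lambda$. You assert that $\int_{\mathbb{R}^n_+}\int_{\mathbb{R}^n_+}|x-y|^{-\mu}U^{2^*_\mu}(x)U^{2^*_\mu}(y)\,dx\,dy=\tfrac14\int_{\mathbb{R}^n}\int_{\mathbb{R}^n}(\cdots)$ ``by the exact analogue of \eqref{eq3.6}''. But \eqref{eq3.6} works because $|\nabla U|^2$ is a local even function of a single variable. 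Here the kernel $|x-y|^{-\mu}$ couples $x$ and $y$ and is \emph{not} invariant under reflecting only one of them: for $x,y\in\mathbb{R}^n_+$ one has $|x-y|<|(x',x_n)-(y',-y_n)|$, hence
\[
\int_{\mathbb{R}^n_+}\!\int_{\mathbb{R}^n_+}\frac{f(x)f(y)}{|x-y|^\mu}\,dx\,dy\;>\;\int_{\mathbb{R}^n_+}\!\int_{\mathbb{R}^n_-}\frac{f(x)f(y)}{|x-y|^\mu}\,dx\,dy
\]
strictly for any nontrivial $f\ge0$, and therefore $I_{++}>\tfrac14 I_{\mathrm{total}}$ by an $O(1)$ amount, not $o(\lambda^{-(\beta-1)})$. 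Your own $\mathcal{E}_\lambda$-device cannot rescue this, precisely because in $A_\lambda$ the outer variable $x$ ranges over all of $B^+$, not just the thin layer near $\{x_n=0\}$. To repair the argument you must, as the paper does, pass to the full convolution via \eqref{eq3.28} \emph{before} reflecting, so that only single integrals of radial functions remain.
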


\begin{proof}
 We have
\[
\|w_\lambda\|_{HLS}^2
= \left(
\int_{\Omega} (I_\mu * w_\lambda^{2^*_\mu})(x)\,
w_\lambda^{2^*_\mu}(x)\, dx
\right)^{\frac{1}{2^*_\mu}}
=: D^{\frac{1}{2^*_\mu}}.
\]
Let
\[
\overline{\alpha}_{n,\mu}
= \frac{\Gamma\!\left(\frac{\mu}{2}\right)}
{\Gamma\!\left(\frac{n-\mu}{2}\right)
\pi^{\frac{n}{2}} 2^{\,n-\mu}}.
\]
Following the computations of [\cite{giacomoni2023critical}, Section~3], we write
\begin{equation}\label{eq3.26}
\begin{aligned}
D
&= \int_{\Omega} \int_{\Omega}
\frac{\overline{\alpha}_{n,\mu}}{|x-y|^\mu}
w_\lambda^{2^*_\mu}(y) w_\lambda^{2^*_\mu}(x)\, dx\, dy \\
&= \frac{1}{4}
\int_{B(0,\frac{\rho}{2})} \int_{B(0,\frac{\rho}{2})}
\frac{\overline{\alpha}_{n,\mu}}{|x-y|^\mu}
U_{(0,\lambda)}^{2^*_\mu}(y) U_{(0,\lambda)}^{2^*_\mu}(x)\, dx\, dy \\
&\quad
- \int_{\Sigma} \int_{B(0,\frac{\rho}{2})}
\frac{\overline{\alpha}_{n,\mu}}{|x-y|^\mu}
U_{(0,\lambda)}^{2^*_\mu}(y) U_{(0,\lambda)}^{2^*_\mu}(x)\, dx\, dy
+ R_1 + R_2 + R_3 ,
\end{aligned}
\end{equation}
where
\begin{align*}
R_1 &=
O\!\left(
\int_{|x|>\frac{\rho}{2}} \int_{\Omega}
\frac{1}{|x-y|^\mu}
U_{(0,\lambda)}^{2^*_\mu}(y) U_{(0,\lambda)}^{2^*_\mu}(x)\, dx\, dy
\right)
= O\!\left( \frac{1}{\lambda^\frac{2n-\mu}{2}} \right)
= O\!\left( \frac{1}{\lambda^{n-2}} \right),
\quad \mu \le 4, \\[4pt]
R_2 &=
O\!\left(
\int_{|x|>\frac{\rho}{2}} \int_{|y|>\frac{\rho}{2}}
\frac{1}{|x-y|^\mu}
U_{(0,\lambda)}^{2^*_\mu}(y) U_{(0,\lambda)}^{2^*_\mu}(x)\, dx\, dy
\right)
= O\!\left( \frac{1}{\lambda^{2n-\mu}} \right)
= O\!\left( \frac{1}{\lambda^{n-2}} \right), \\[4pt]
R_3 &=
O\!\left(
\int_{\Sigma} \int_{\Sigma}
\frac{1}{|x-y|^\mu}
U_{(0,\lambda)}^{2^*_\mu}(y) U_{(0,\lambda)}^{2^*_\mu}(x)\, dx\, dy
\right)= O\left( \frac{1}{\lambda^{2n-\mu}} \right)+ O\left( \frac{1}{\lambda^\frac{(2n-\mu)(\beta-1)}{n}} \right) \\&\quad =
o\left( \frac{1}{\lambda^{\beta-1}} \right).
\end{align*}
We now compute the first integral of \eqref{eq3.26}. We have
\begin{equation}\label{eq3.27}
\begin{aligned}
&\int_{B(0,\frac{\rho}{2})} \int_{B(0,\frac{\rho}{2})}
\frac{\overline{\alpha}_{n,\mu}}{|x-y|^\mu}
U_{(0,\lambda)}^{2^*_\mu}(y) U_{(0,\lambda)}^{2^*_\mu}(x)\, dx\, dy \\
&= \int_{\mathbb{R}^n} \int_{\mathbb{R}^n}
\frac{\overline{\alpha}_{n,\mu}}{|x-y|^\mu}
U_{(0,\lambda)}^{2^*_\mu}(y) U_{(0,\lambda)}^{2^*_\mu}(x)\, dx\, dy
+ R_1+ R_2 \\
&= \int_{\mathbb{R}^n}
(I_\mu * U_{(0,\lambda)}^{2^*_\mu})(x)
U_{(0,\lambda)}^{2^*_\mu}(x)\, dx
+ O\!\left( \frac{1}{\lambda^{n-2}} \right) \\
&= S_{HL}^{\frac{2^*_\mu}{2^*_\mu-1}}
+ O\!\left( \frac{1}{\lambda^{n-2}} \right),
\end{aligned}
\end{equation}
since \(U_{(0,\lambda)}\) is a minimizer of \eqref{eq2.3} in \(\mathbb{R}^n\) and satisfies \eqref{eq2.6}.

We now estimate the second integral of \eqref{eq3.26}. Note that from \eqref{eq2.4}, the function \(\delta_{(0,\lambda)}\) satisfies \[ -\Delta \delta_{(0,\lambda)} = \delta_{(0,\lambda)}^{2^*-1} \quad \text{in } \mathbb{R}^n, \] where \(2^* = \frac{2n}{n-2}\).
Therefore, from \eqref{eq2.4} and \eqref{eq2.5}, we obtain
\begin{equation}\label{eq3.28}
I_\mu * U_{(0,\lambda)}^{2^*_\mu}
= \frac{1}{\tilde{C}_{n,\mu}^{\frac{4}{n-2}}}
U_{(0,\lambda)}^{2^*-2^*_\mu}
\quad \text{in } \mathbb{R}^n.
\end{equation}
It follows that
\begin{align*}
&\int_{\Sigma} \int_{B(0,{\rho})}
\frac{\overline{\alpha}_{n,\mu}}{|x-y|^\mu}
U_{(0,\lambda)}^{2^*_\mu}(y) U_{(0,\lambda)}^{2^*_\mu}(x)\, dx\, dy \\
&= \int_{\Sigma} \int_{\mathbb{R}^n}
\frac{\overline{\alpha}_{n,\mu}}{|x-y|^\mu}
U_{(0,\lambda)}^{2^*_\mu}(y) U_{(0,\lambda)}^{2^*_\mu}(x)\, dx\, dy
+ R_1 \\
&= \frac{1}{\tilde{C}_{n,\mu}^{\frac{4}{n-2}}}
\int_{\Sigma} U_{(0,\lambda)}^{2^*}(x) \, dx
+ O\!\left( \frac{1}{\lambda^{n-2}} \right),
\end{align*}
since \(U_{(0,\lambda)}\) satisfies \eqref{eq3.28}.
Using now \eqref{eq2.4} and \eqref{eq2.5}, we obtain
\[
\int_{\Sigma} \int_{B(0,\frac{\rho}{2})}
\frac{\overline{\alpha}_{n,\mu}}{|x-y|^\mu}
U_{(0,\lambda)}^{2^*_\mu}(y) U_{(0,\lambda)}^{2^*_\mu}(x)\, dx\, dy
= \tilde{C}_{n,\mu}^{2} (n(n-2))^{\frac{n}{2}}
\int_{\Sigma} \frac{\lambda^n}{(1+\lambda^2 |x|^2)^n}\, dx .
\]
Recall from \eqref{eq2.15} that
\[
\int_{\Sigma \cap \tilde{L}_{\delta}}
\frac{\lambda^n}{(1+\lambda^2 |x|^2)^n}\, dx
= c_2 \frac{\sum_{k=1}^{n-1} \gamma_k}{\lambda^{\beta-1}}
+ o\left( \frac{1}{\lambda^{\beta-1}} \right),
\]
where
\[
c_2 = \int_{\mathbb{R}^{n-1}} \frac{|z_1|^\beta}{(1+|z|^2)^n}\, dz.
\]
Moreover, by a direct computation,
\[
\int_{\Sigma \cap \tilde{L}_{\delta}^{\,c}}
\frac{\lambda^n}{(1+\lambda^2 |x|^2)^n}\, dx
= O\!\left( \int_{|z|>\lambda \delta}
\frac{dz}{(1+|z|^2)^n} \right)
= O\!\left( \frac{1}{\lambda^n} \right).
\]
Therefore, we conclude that
\begin{equation}\label{eq3.281}
\begin{aligned}
\int_{\Sigma} \int_{B(0,\frac{\rho}{2})}
\frac{\overline{\alpha}_{n,\mu}}{|x-y|^\mu}
U_{(0,\lambda)}^{2^*_\mu}(y)\,
U_{(0,\lambda)}^{2^*_\mu}(x)\, dx\, dy
=\;&
c_2 \tilde{C}_{n,\mu}^{2} (n(n-2))^{\frac{n}{2}}
\frac{\sum_{k=1}^{n-1} \gamma_k}{\lambda^{\beta-1}} \\
&\quad +\; o\!\left( \frac{1}{\lambda^{\beta-1}} \right),
\qquad \text{for any } \beta \le n-1 .
\end{aligned}
\end{equation}
Combining estimates \eqref{eq3.26}--\eqref{eq3.27} with \eqref{eq3.281}, we obtain
\[
D
= \frac{1}{4} S_{HL}^{\frac{2^*_\mu}{2^*_\mu-1}}
\left(
1 - \frac{4 c_2 \tilde{C}_{n,\mu}^{2} (n(n-2))^{\frac{n}{2}}
\sum_{k=1}^{n-1} \gamma_k}
{S_{HL}^{\frac{2^*_\mu}{2^*_\mu-1}} \lambda^{\beta-1}}
\right)
+ o\left( \frac{1}{\lambda^{\beta-1}} \right).
\]
Hence, the proof follows from the above expansions.
\end{proof}
%%%%%%%%%%%%%%%%%%%%%%%%%%%%%%%%%%%%%%%%%%
\begin{lemma}\label{lem3.5}
Under the assumptions of Theorems \ref{th1.3} and \ref{th1.4}, inequality \eqref{eq3.1} holds.
\end{lemma}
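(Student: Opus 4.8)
The plan is to evaluate the quotient $J$ from \eqref{eq2.7} on the test family $w_\lambda$ and to combine Lemmas \ref{lem3.3} and \ref{lem3.4}, carrying the expansions one order further so as to produce a strictly negative correction term. Write $q:=\frac{2^*_\mu}{2^*_\mu-1}$ and $t:=\lambda^{-(\beta-1)}$. For $\beta<n-1$ the two lemmas give
\[
\|w_\lambda\|_{V(\Omega)}^2 \le \frac{1}{2}\, S_{HL}^{q}\,(1-a\,t)+o(t),\qquad
\|w_\lambda\|_{HLS}^2 = \left(\frac{1}{2}\right)^{\frac{2}{2^*_\mu}} S_{HL}^{\frac{1}{2^*_\mu-1}}\,(1-b\,t)+o(t),
\]
where
\[
a=\frac{2(c_1-c_2)\,\tilde{C}_{n,\mu}^2\,(n-2)^2\,(n(n-2))^{\frac{n-2}{2}}\sum_{k=1}^{n-1}\gamma_k}{S_{HL}^{q}},\qquad
b=\frac{4\,c_2\,\tilde{C}_{n,\mu}^2\,(n(n-2))^{\frac{n}{2}}\sum_{k=1}^{n-1}\gamma_k}{2^*_\mu\,S_{HL}^{q}}.
\]
Dividing, and using the elementary identities $q-\frac{1}{2^*_\mu-1}=1$ and $1-\frac{2}{2^*_\mu}=\frac{2^*_\mu-2}{2^*_\mu}$, the two prefactors collapse to exactly $\left(\frac{1}{2}\right)^{\frac{2^*_\mu-2}{2^*_\mu}} S_{HL}$; together with $\frac{1-a t}{1-b t}=1-(a-b)\,t+O(t^2)$ this yields
\[
J(w_\lambda)\le \left(\frac{1}{2}\right)^{\frac{2^*_\mu-2}{2^*_\mu}} S_{HL}\,\bigl(1-(a-b)\,t\bigr)+o(t).
\]
Thus \eqref{eq3.1} will follow for $\lambda$ large as soon as I show $a>b$.

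After cancelling the common positive factor $\tilde{C}_{n,\mu}^2\,(n(n-2))^{\frac{n-2}{2}}\bigl(\sum_{k=1}^{n-1}\gamma_k\bigr)S_{HL}^{-q}$ and using $2^*_\mu=\frac{2n-\mu}{n-2}$, the inequality $a>b$ is equivalent to $\frac{c_1}{c_2}>\frac{4n-\mu}{2n-\mu}$. The decisive point is that $c_1/c_2$ is computable in closed form. Integrating $\partial_{z_j}\!\bigl[z_j\,(1+|z|^2)^{-(n-1)}\bigr]$ against $|z_1|^\beta$ over $\mathbb{R}^{n-1}$ for a fixed index $j\ne 1$ (the boundary terms vanishing) and then summing over $j=2,\dots,n-1$ yields
\[
(n-2)\,c_1 = 2(n-1)\bigl[(c_1-c_2)-I_1\bigr],\qquad I_1:=\int_{\mathbb{R}^{n-1}}\frac{|z_1|^{\beta+2}}{(1+|z|^2)^{n}}\,dz,
\]
while integrating $\partial_{z_1}\!\bigl[z_1|z_1|^\beta\,(1+|z|^2)^{-(n-1)}\bigr]$ over $\mathbb{R}^{n-1}$ gives $(\beta+1)\,c_1=2(n-1)\,I_1$. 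Eliminating $I_1$ between these two relations produces
\[
c_1=\frac{2(n-1)}{\,n-1-\beta\,}\,c_2 .
\]
Substituting, $a>b$ becomes $2(n-1)(2n-\mu)>(4n-\mu)(n-1-\beta)$, which simplifies to $\beta(4n-\mu)>\mu(n-1)$, i.e. $\beta>\frac{\mu(n-1)}{4n-\mu}$. Since $\mu\le 4$ in every admissible range (and $\mu<4$ for $n=3,4$), one has $\frac{\mu(n-1)}{4n-\mu}\le 1<\beta$ by $\mathbf{(H_1)}$; hence $a>b$, and therefore $S_{HL}(\Gamma_0)\le J(w_\lambda)<\left(\frac{1}{2}\right)^{\frac{2^*_\mu-2}{2^*_\mu}}S_{HL}$ for $\lambda$ large, which is precisely \eqref{eq3.1}.

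It remains to treat $\beta=n-1$. In this case Lemma \ref{lem3.3} gives
\[
\|w_\lambda\|_{V(\Omega)}^2 \le \frac{1}{2}\, S_{HL}^{q}\left(1-\frac{2\hat{c}\sum_{k=1}^{n-1}\gamma_k}{S_{HL}^{q}}\,\frac{\log\lambda}{\lambda^{\beta-1}}\right)+o\!\left(\frac{\log\lambda}{\lambda^{\beta-1}}\right),
\]
while Lemma \ref{lem3.4} (valid for every $\beta\le n-1$) gives $\|w_\lambda\|_{HLS}^2=\left(\frac{1}{2}\right)^{\frac{2}{2^*_\mu}} S_{HL}^{\frac{1}{2^*_\mu-1}}\bigl(1+O(\lambda^{-(\beta-1)})\bigr)$. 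Because $\lambda^{-(\beta-1)}=o\bigl(\log\lambda\cdot\lambda^{-(\beta-1)}\bigr)$, the logarithmic correction in the numerator dominates, and dividing exactly as above we obtain
\[
J(w_\lambda)\le \left(\frac{1}{2}\right)^{\frac{2^*_\mu-2}{2^*_\mu}} S_{HL}\left(1-\frac{2\hat{c}\sum_{k=1}^{n-1}\gamma_k}{S_{HL}^{q}}\,\frac{\log\lambda}{\lambda^{\beta-1}}\right)+o\!\left(\frac{\log\lambda}{\lambda^{\beta-1}}\right).
\]
Since $\hat{c}>0$ and $\sum_{k=1}^{n-1}\gamma_k>0$, again $J(w_\lambda)<\left(\frac{1}{2}\right)^{\frac{2^*_\mu-2}{2^*_\mu}}S_{HL}$ for $\lambda$ large, which proves \eqref{eq3.1}. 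The main obstacle is the closed-form evaluation of $c_1/c_2$ and the verification that the resulting threshold condition $\beta>\mu(n-1)/(4n-\mu)$ is always forced by $\mathbf{(H_1)}$; the two integration-by-parts identities are what make this transparent, and the borderline case $\mu=4$ (possible only when $n\ge 5$) is exactly where the strict inequality $\beta>1$ built into $\mathbf{(H_1)}$ is needed.
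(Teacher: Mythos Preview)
Your proof is correct and follows the paper's overall architecture: combine Lemmas~\ref{lem3.3} and~\ref{lem3.4}, identify the first-order correction in $J(w_\lambda)$, and show that it is strictly negative. The treatment of the borderline case $\beta=n-1$ is also the same as the paper's.

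Where you genuinely diverge is in the proof that the correction coefficient is positive. The paper never computes $c_1/c_2$; instead it bounds $\bar{c}=(n-2)c_1-(n-2+\tfrac{2n}{2^*_\mu})c_2$ from below, using $\mu\le 4$ to replace $1+\tfrac{2n}{2n-\mu}$ by $\tfrac{2(n-1)}{n-2}$, then performs the change of variables $y=z\sqrt{(n-2)/n}$ and a polar-coordinate symmetry $r\mapsto 1/r$ to conclude $\int_{\mathbb{R}^{n-1}}|y_1|^\beta(|y|^2-1)(1+|y|^2)^{-n}\,dy>0$. Your route is more direct: the two integration-by-parts identities yield the exact relation $c_1=\tfrac{2(n-1)}{\,n-1-\beta\,}c_2$, which reduces the sign question to the elementary inequality $\beta>\tfrac{\mu(n-1)}{4n-\mu}$, automatically satisfied because $\mu\le 4$ forces the right side to be $\le 1<\beta$. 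Your argument is shorter, makes the role of the constraint $\mu\le 4$ transparent, and as a bonus shows that the paper's substitution and symmetry trick are not needed. The paper's approach, on the other hand, avoids computing $c_1/c_2$ explicitly and is perhaps closer in spirit to the classical Adimurthi--Mancini estimates.
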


\begin{proof}
Let \(\lambda>0\) be sufficiently large. By \eqref{eq2.7}, we have
\[
J(w_\lambda)=\frac{\|w_\lambda\|_{V(\Omega)}^2}{\|w_\lambda\|_{HLS}^2}.
\]
Using the expansions of Lemmas \ref{lem3.3} and \ref{lem3.4}, we obtain for \(\beta<n-1\),
\begin{equation}\label{eq3.31}
\begin{aligned}
J(w_\lambda) \leq {} &
\left( \frac{1}{2} \right)^{\frac{2^*_\mu-2}{2^*_\mu}} S_{HL}
\Bigg[
1 - \frac{2}{S_{HL}^{\frac{2^*_\mu}{2^*_\mu-1}}}
\tilde{C}_{n,\mu}^2 (n(n-2))^{\frac{n-2}{2}}(n-2)  \\
&\qquad \times
\left( (n-2)c_1 -
\left( n-2+\frac{2}{2^*_\mu}n \right)c_2 \right)
\frac{\sum_{k=1}^{n-1} \gamma_k}{\lambda^{\beta-1}}
\Bigg]
+ o\left( \frac{1}{\lambda^{\beta-1}} \right),
\end{aligned}
\end{equation}
where the constants \(c_1\) and \(c_2\) are defined in Lemma \ref{lem3.1}.

We now claim that
\begin{equation}\label{eq3.32}
\bar{c}
:= (n-2)c_1 - \left( n-2+\frac{2}{2^*_\mu}n \right)c_2 > 0 .
\end{equation}
Indeed,
\[
\bar{c}
= (n-2)\left(
\int_{\mathbb{R}^{n-1}} \frac{|z_1|^\beta}{(1+|z|^2)^{n-1}}\, dz
- \left(1+\frac{2n}{2n-\mu}\right)
\int_{\mathbb{R}^{n-1}} \frac{|z_1|^\beta}{(1+|z|^2)^n}\, dz
\right).
\]
Since \(0<\mu\le 4\), we obtain
\[
\bar{c}
\ge (n-2)\left(
\int_{\mathbb{R}^{n-1}} \frac{|z_1|^\beta}{(1+|z|^2)^{n-1}}\, dz
- \frac{2(n-1)}{n-2}
\int_{\mathbb{R}^{n-1}} \frac{|z_1|^\beta}{(1+|z|^2)^n}\, dz
\right)
\]
\[
\ge (n-2)\int_{\mathbb{R}^{n-1}}
\frac{|z_1|^\beta\left(|z|^2-\frac{n}{n-2}\right)}{(1+|z|^2)^n}\, dz.
\]
Setting \(y=\frac{z}{\sqrt{\frac{n}{n-2}}}\), we infer that
\begin{align*}
\bar{c}
\ge {} & (n-2)
\left(\sqrt{\frac{n}{n-2}}\right)^{\beta+1}
\left(\frac{n-2}{n}\right)^{n-1}
\int_{\mathbb{R}^{n-1}}
\frac{|y_1|^\beta (|y|^2-1)}
{\left(\frac{n-2}{n}+|y|^2\right)^n}\, dy \\
\ge {} & (n-2)
\left(\sqrt{\frac{n}{n-2}}\right)^{\beta+1}
\left(\frac{n-2}{n}\right)^{n-1}
\int_{\mathbb{R}^{n-1}}
\frac{|y_1|^\beta (|y|^2-1)}
{(1+|y|^2)^n}\, dy .
\end{align*}
Using polar coordinates and a direct computation, we find
\[
\int_{\mathbb{R}^{n-1}}
\frac{|y_1|^\beta(|y|^2-1)}{(1+|y|^2)^n}\, dy
= c \int_1^{+\infty}
\frac{r^{n-2-\beta}(r^2-1)(r^{2\beta}-1)}{(1+r^2)^n}\, dr
>0,
\]
which proves the claim \eqref{eq3.32}.

Combining \eqref{eq3.31} and \eqref{eq3.32}, we write 
\[
J(w_\lambda)
\le \left( \frac{1}{2} \right)^{\frac{2^*_\mu-2}{2^*_\mu}} S_{HL}
\left(
1 - c \frac{\sum_{k=1}^{n-1} \gamma_k}{\lambda^{\beta-1}}(1+o(1))
\right),
\]
where \(c>0\). Hence, for \(\lambda\) sufficiently large,
\[
J(w_\lambda)
< \left( \frac{1}{2} \right)^{\frac{2^*_\mu-2}{2^*_\mu}} S_{HL},
\]
since by assumption \(\mathbf{(H_1)}\), \(\sum_{k=1}^{n-1} \gamma_k>0\).

If \(\beta=n-1\), Lemma \ref{lem3.4} yields
\begin{equation}\label{eq3.33}
\|w_\lambda\|_{HLS}^2
= \left( \frac{1}{2} \right)^{\frac{2}{2^*_\mu}}
S_{HL}^{\frac{1}{2^*_\mu-1}}
\left(1+ o\left(\frac{\log\lambda}{\lambda^{\beta-1}}\right)\right).
\end{equation}
Together with the second estimate in Lemma \ref{lem3.3} and \eqref{eq3.33}, we get
\[
J(w_\lambda)
< \left( \frac{1}{2} \right)^{\frac{2^*_\mu-2}{2^*_\mu}} S_{HL}.
\]
This completes the proof.
\end{proof}

\begin{proof}[\textbf{Proof of Theorems 1.3 and 1.4}]
The result follows directly from Lemma \ref{lem3.5} and [\cite{giacomoni2023critical}, Lemma~3.2].
\end{proof}

\section*{Declarations}

\noindent\textbf{Ethical Approval.}
Not applicable.

\medskip
\noindent\textbf{Competing interests.}
The authors declare that they have no competing interests.

\medskip
\noindent\textbf{Authors’ contributions.}
The authors contributed equally to this work.

\medskip
\noindent\textbf{Availability of data and materials.}
Data sharing is not applicable to this article as no new data were created or analyzed in this study.

\bibliographystyle{plain}
\bibliography{references}

\end{document}